\newtheorem{theorem}{Theorem}
\newtheorem{lemma}{Lemma}
\newtheorem{algorithm}{Algorithm}
\newtheorem{remark}{Remark}
\newtheorem{definition}{Definition}
\newtheorem{corollary}{Corollary}
\newcommand{\rd}{\,\mathrm{d}}
\newcommand{\rtr}{\,\mathrm{tr}}
\newcommand{\bsb}{\boldsymbol{b}}
\newcommand{\bsx}{\boldsymbol{x}}
\newcommand{\bsy}{\boldsymbol{y}}
\newcommand{\bsz}{\boldsymbol{z}}
\newcommand{\bsl}{\boldsymbol{l}}
\newcommand{\bsk}{\boldsymbol{k}}
\newcommand{\bst}{\boldsymbol{t}}
\newcommand{\bsa}{\boldsymbol{a}}
\newcommand{\bsq}{\boldsymbol{q}}
\newcommand{\bsgamma}{\boldsymbol{\gamma}}
\newcommand{\bsalpha}{\boldsymbol{\alpha}}
\newcommand{\bszero}{\boldsymbol{0}}
\newcommand{\nat}{\mathbb{N}}
\newcommand{\EE}{\mathbb{E}}
\newcommand{\RR}{\mathbb{R}}
\newcommand{\BB}{\mathcal{B}}
\newcommand{\FF}{\mathbb{F}}
\newcommand{\Dcal}{\mathcal{D}}
\newcommand{\ZZ}{\mathbb{Z}}
\newcommand{\wal}{\mathrm{wal}}
\newcommand{\Var}{\mathrm{Var}}
\begin{document}

\title{Construction of interlaced scrambled polynomial lattice rules of arbitrary high order}

\author{Takashi Goda\\ Graduate School of Engineering,\\ The University of Tokyo,\\ 7-3-1 Hongo, Bunkyo-ku, Tokyo 113-8656, Japan\\ \url{goda@frcer.t.u-tokyo.ac.jp}\\
and\\
Josef Dick\\ School of Mathematics and Statistics,\\ The University of New South Wales,\\ Sydney, NSW 2052, Australia\\ \url{josef.dick@unsw.edu.au}}

\date{\today}

\maketitle

\begin{abstract}
Higher order scrambled digital nets are randomized quasi-Monte Carlo rules which have recently been introduced in [J. Dick, Ann. Statist., 39 (2011),  1372--1398] and shown to achieve the optimal rate of convergence of the root mean square error for numerical integration of smooth functions defined on the $s$-dimensional unit cube. The key ingredient there is a digit interlacing function applied to the components of a randomly scrambled digital net whose number of components is $ds$, where the integer $d$ is the so-called interlacing factor. In this paper, we replace the randomly scrambled digital nets by randomly scrambled polynomial lattice point sets, which allows us to obtain a better dependence on the dimension while still achieving the optimal rate of convergence. Our results apply to Owen's full scrambling scheme as well as the simplifications studied by Hickernell, Matou\v{s}ek and Owen. We consider weighted function spaces with general weights, whose elements have square integrable partial mixed 
derivatives of order up to $\alpha\ge 1$, and derive an upper bound on the variance of the estimator for higher order scrambled 
polynomial lattice rules. Employing our obtained bound as a quality criterion, we prove that the component-by-component construction can be used to obtain explicit constructions of good polynomial lattice point sets. By first constructing classical polynomial lattice point sets in base $b$ and dimension $ds$, to which we then apply the interlacing scheme of order $d$, we obtain a construction cost of the algorithm of order $\mathcal{O}(dsmb^m)$ operations using $\mathcal{O}(b^m)$ memory in case of product weights, where $b^m$ is the number of points in the polynomial lattice point set.
\end{abstract}

\section{Introduction}
\label{Intro}

In this paper we study the approximation of multivariate integrals of smooth functions defined over the $s$-dimensional unit cube $[0,1]^s$,
  \begin{align*}
    I(f) =  \int_{[0,1]^s}f(\bsx)\rd \bsx ,
  \end{align*}
by averaging function values evaluated at $N$ points $\bsx_0,\ldots, \bsx_{N-1}$ with equal weights,
  \begin{align*}
    \hat{I}(f) = \frac{1}{N}\sum_{n=0}^{N-1}f(\bsx_n) .
  \end{align*}
While Monte Carlo methods choose the point set randomly, quasi-Monte Carlo (QMC) methods aim at choosing the quadrature points in a deterministic manner such that they are distributed as uniformly as possible. The Koksma-Hlawka inequality guarantees that such well-distributed point sets yield a small integration error bound, typically of order $N^{-1+\delta}$ for any $\delta > 0$, for any function which has bounded variation on $[0,1]^s$ in the sense of Hardy and Krause, see for instance \cite[Chapter~2, Section~5]{kuinie}. Digital constructions have been recognized as a powerful means of generating QMC point sets \cite{DP10,N92a}. These include the well-known constructions for digital sequences by Sobol' \cite{S67}, Faure \cite{F82}, Niederreiter \cite{N88}, Niederreiter and Xing \cite{NX} as well as others, see  \cite[Chapter~8]{DP10} for more information. Polynomial lattice point sets, first proposed in \cite{N92b}, are a special construction for digital nets and have been studied in many papers, see for 
example \cite{DKPS05,DLP05,KP07,LLNS96,LL03}. Polynomial lattice rules are QMC rules using a polynomial lattice point set as quadrature points. The major advantage of polynomial lattice rules lies in its flexibility, that is, we can design a suitable rule for the problem at hand.

In this paper we study randomized QMC rules, that is, the deterministic quadrature points are randomized such that their essential structure is retained. Owen's scrambling algorithm can be used to randomize digital nets and sequences while maintaining their equidistribution properties \cite{O95,O97a,O97b}. This not only yields a simple error estimation but also achieves a convergence of the root mean square error (RMSE) of order $N^{-3/2+\delta}$, for functions of bounded generalized variation. Since the estimator is unbiased, this can also be stated in another way, namely that the variance of the estimator decays at a rate of $N^{-3+\delta}$. It is shown in \cite{BD11} that the variance of the estimator based on a scrambled polynomial lattice rule constructed component-by-component (CBC) decays at a rate of $N^{-(2\alpha+1)+\delta}$, for functions which have bounded generalized variation of order $\alpha$ for some $0< \alpha \le 1$.

Here we consider higher smoothness, namely $\alpha \ge 1$ for which we can improve the rate of convergence of the variance of the integration error further. The initial ideas for our approach stems from the papers \cite{D07,D08,D09}. Therein higher order digital constructions of deterministic point sets and sequences were introduced whose corresponding QMC rules achieve an integration error of order $N^{-\alpha+\delta}$ for functions with square integrable partial mixed derivatives of order $\alpha\ge 1$ in each variable. An explicit construction of suitable point sets and sequences is the following interlacing algorithm. Let $d\ge 1$ and $b\ge 2$ be integers and  $\bsz\in [0,1)^{ds}$ with components $z_j=z_{j,1}b^{-1}+z_{j,2}b^{-2}+\cdots $ for $1\le j\le ds$. Then let a point $\bsx = (x_1,\ldots, x_s) \in [0,1)^s$ be given by
  \begin{align}
    x_j = \sum_{a=1}^{\infty}\sum_{r=1}^{d}z_{(j-1)d+r,a}b^{-r-(a-1)d} ,
  \label{eq:interlacing}
  \end{align}
for $1\le j\le s$. Thus, every $d$ components of $\bsz$ are interlaced to produce one component of $\bsx$. To obtain a higher order digital net or sequence, one applies the interlacing algorithm to one of the above-mentioned digital constructions. Furthermore, as shown in \cite{D11}, Owen's scrambling algorithm can be used to achieve a convergence of the variance of the estimator of order $N^{-(2\min(\alpha,d)+1)+\delta}$ for $\alpha \ge 1$. For $d\ge \alpha$, this decay rate is the best possible. For the algorithm in \cite{D11} it is important to note that one first applies Owen's scrambling to a point $\bsz$ of the digital net (or sequence) in dimension $ds$ and then interlaces the resulting point according to (\ref{eq:interlacing}) to obtain $\bsx \in [0,1]^s$. We call this method Owen's scrambling of order $d$, or order-$d$ scrambling for short here. In the proof of the convergence rate, it was assumed in \cite{D11} that the underlying point set is explicitly given by some digital $(t,m,ds)$-net or $(t,
ds)$-sequence. The $t$-value of digital $(t,ds)$-sequences, however, grows at least linearly with $s$, and consequently, it becomes hard to obtain a bound of the variance independent of the dimension.

In this paper, we study order-$d$ scrambled polynomial lattice point sets for numerical integration. Our strategy is to construct classical polynomial lattice rules in dimension $ds$ using a suitable quality criterion, then apply Owen's scrambling to the quadrature points of the polynomial lattice rule and finally to apply the interlacing algorithm of order $d$ to obtain a randomized quadrature rule for the domain $[0,1]^s$. We refer to such quadrature rules by {\it interlaced scrambled polynomial lattice rules}. The major contributions of our study are to derive a computable upper bound on the variance of the estimator for higher order scrambled digital nets, which is an extension of the study in \cite{D11}, and by employing our obtained bound as a quality criterion, to prove that the CBC construction can be used to obtain good polynomial lattice rules. Through our argument we need to overcome several non-trivial technical difficulties specific to the interlacing algorithm. The resulting advantage compared 
to the results in \cite{D11} is the weaker dependence on the dimension and the possibility to construct the rules for a given set of weights when the integrand has finite weighted bounded variation, see Subsection~\ref{ssec:var}. As in \cite{SW98}, the weights model the dependence of the integrand on certain projections. With our approach, we are able to obtain tractability results under certain conditions on the weights. Furthermore, our results also apply to the simplified scrambling schemes studied by Hickernell~\cite{H96}, Matou\v{s}ek~\cite{Mat98} and Owen~\cite{O03}. Thus efficient implementations of the scrambling procedure are available for our interlaced scrambled polynomial lattice rules.

As in \cite{D11}, the upper bound on the variance in this paper is, apart from the factor $N^\delta$, optimal in terms of the dependence on the number of points (see \cite{No88}), and compared to \cite{D11} improves the dependence of the upper bound on the dimension. We are not aware of any other randomized equal weight quadrature rule with the properties shown in this paper. An alternative (in general, non-equal weight) algorithm based on Monte Carlo and `separation of the main part' is for instance discussed in \cite[Section~7.4]{MNR}. This algorithm also achieves the optimal rate of convergence in terms of the number of points, but they do not discuss the dependence of this method on the dimension. In fact, \cite[Open Problem~91]{NW10} asks for the precise condition on the weights such that one obtains an upper bound independent of the dimension for a certain Sobolev space of smoothness $\alpha = 1$. Corollary~\ref{cor1} below provides an upper bound which is independent of the dimension for a different function space, however, we do not know whether our result is also best possible.

In the next section we describe the necessary background and notation, namely polynomial lattice rules, Owen's scrambling, and higher order digital constructions. We also describe the main results of the paper. Namely we introduce a component-by-component algorithm, state a result on the convergence behavior of the interlaced scrambled polynomial lattice rule and discuss randomized QMC tractability. In Section~\ref{Variance} we derive an upper bound on the variance of the estimator in the weighted function space with general weights where a function has square integrable partial mixed derivatives of order $\alpha\ge 1$ in each variable. Using this bound we show how the quality criterion for the construction of interlaced scrambled polynomial lattice rules is derived. In Section~\ref{CBC construction} we prove that interlaced scrambled polynomial lattice rules constructed using the CBC algorithm can achieve a convergence of the variance of the estimator of order $N^{-(2\min(\alpha,d)+1)+\delta}$. Thereafter we assume 
product weights for simplicity of exposition and describe the fast CBC construction by using the fast Fourier transform as introduced in \cite{NC06a,NC06b}. We show that the interlaced 
scrambled polynomial lattice rules in
base $b$ can be constructed in order $\mathcal{O}(dsmb^m)$ operations using order $\mathcal{O}(b^m)$ memory, where $b^m$ is the number of points in $[0,1]^s$. This is a significant reduction in the construction cost to previously obtained component-by-component algorithms for higher order polynomial lattice rules~\cite{BDLNP}. We conclude this paper with numerical experiments in Section~\ref{Experiments}.

\section{Background, notation and results}
\label{Background}

In this section, as necessary tools for our study, we  introduce polynomial lattice rules, Owen's scrambling algorithm, and higher order digital net constructions. Thereafter, we describe the main results of the paper.

Let $\nat$ denote the set of positive integers and $\nat_0$ denote the set of non-negative integers. For $i,j\in \nat$ such that $i\le j$, we denote by $\{i:j\}$ the index set $\{i,i+1,\ldots, j-1,j\}$. For a prime $b$, let $\FF_b$ be the finite field containing $b$ elements $\{0,\ldots, b-1 \}$. For simplicity we identify the elements of $\FF_b$ with the integers $0, 1, \ldots, b-1 \in \ZZ$.

\subsection{Polynomial lattice rules}

We introduce some notation first. For a prime $b$, we denote by $\FF_b((x^{-1}))$ the field of formal Laurent series over $\FF_b$. Every element of $\FF_b((x^{-1}))$ is of the form
  \begin{align*}
    L = \sum_{l=w}^{\infty}t_l x^{-l} ,
  \end{align*}
where $w$ is some integer and all $t_l\in \FF_b$. Further, we denote by $\FF_b[x]$ the set of all polynomials over $\FF_b$. For a given integer $m$, we define the map $v_m$ from $\FF_b((x^{-1}))$ to the interval $[0,1)$ by
  \begin{align*}
    v_m\left( \sum_{l=w}^{\infty}t_l x^{-l}\right) =\sum_{l=\max(1,w)}^{m}t_l b^{-l}.
  \end{align*}
We often identify $k\in \nat_0$, whose $b$-adic expansion is given by $k=\kappa_0+\kappa_1 b+\cdots +\kappa_{a-1} b^{a-1}$, with the polynomial over $\FF_b[x]$ given by $k(x)=\kappa_0+\kappa_1 x+\cdots +\kappa_{a-1} x^{a-1}$.  For $\bsk=(k_1,\ldots, k_s)\in (\FF_b[x])^s$ and $\bsq=(q_1,\ldots, q_s)\in (\FF_b[x])^s$, we define the 'inner product' as
  \begin{align}\label{eq:inner_product}
     \bsk \cdot \bsq =\sum_{j=1}^{s}k_j q_j \in \FF_b[x] ,
  \end{align}
and we write $q\equiv 0 \pmod p$ if $p$ divides $q$ in $\FF_b[x]$.

The definition of a polynomial lattice rule is given as follows.
\begin{definition}\label{def:polynomial_lattice}
Let $b$ be a prime and $m, s \in \nat$. Let $p \in \FF_b[x]$ such that $\deg(p)=m$ and let $\bsq=(q_1,\ldots,q_s) \in (\FF_b[x])^s$. Now we construct a point set consisting of $b^m$ points in $[0,1)^s$ in the following way:  For $0 \le n < b^m$, identify each $n$ with a polynomial $n(x)\in \FF_b[x]$ of $\deg(n(x))<m$. Then the $n$-th point is obtained by setting
  \begin{align*}
    \bsx_n
    &:=
    \left( v_m\left( \frac{n(x) \, q_1(x)}{p(x)} \right) , \ldots , v_m\left( \frac{n(x) \, q_s(x)}{p(x)} \right) \right) \in [0,1)^s .
  \end{align*}
The point set $\{\bsx_0, \ldots, \bsx_{b^m-1}\}$ is called a \emph{polynomial lattice point set} and a QMC rule using this point set is called a \emph{polynomial lattice rule} with generating vector $\bsq$ and modulus $p$.
\end{definition}

We add one more notation and introduce the concept of the so-called {\it dual polynomial lattice} of a polynomial lattice point set. For $k\in \nat_0$ with $b$-adic expansion $k= \kappa_0 + \kappa_1 b+\cdots + \kappa_{a-1} b^{a-1}$, let $\rtr_m(k)$ be the polynomial of degree at most $m$ obtained by truncating the associated polynomial $k(x)\in \FF_b[x]$ as
  \begin{align*}
    \rtr_m(k)= \kappa_0 + \kappa_1 x+\cdots + \kappa_{m-1}x^{m-1},
  \end{align*}
where we set $\kappa_{a} = \cdots = \kappa_{m-1} = 0$ if $a< m$.
For a vector $\bsk=(k_1,\ldots, k_s)\in \nat_0^s$, we define $\rtr_m(\bsk)=(\rtr_m(k_1),\ldots, \rtr_m(k_s))$. With this notation, we introduce the following definition of the dual polynomial lattice $D^{\perp}$.
\begin{definition}\label{def:dual_net}
The dual polynomial lattice of a polynomial lattice point set with modulus $p\in \FF_b[x]$, $\deg(p)=m$, and generating vector $\bsq \in (\FF_b[x])^s$ is given by
  \begin{align*}
     D^\perp  = \{ \bsk\in \nat_0^{s}:\ \mathrm{tr}_m(\bsk)\cdot \bsq\equiv 0 \pmod p \} ,
  \end{align*}
where the inner product is in the sense of (\ref{eq:inner_product}).
\end{definition}

\subsection{Owen's scrambling}

We now introduce Owen's scrambling algorithm. This procedure is best explained by using only one point $\bsx$. We denote the point obtained after scrambling $\bsx$ by $\bsy$. For $\bsx=(x_1,\ldots, x_s)\in [0,1)^s$, we denote the $b$-adic expansion by
  \begin{align*}
     x_j=\frac{x_{j,1}}{b}+\frac{x_{j,2}}{b^2}+\cdots ,
  \end{align*}
for $1\le j\le s$, where we assume that for each $1 \le j \le s$ infinitely many digits $x_{j,k}$ are different from $b-1$. Let $\bsy=(y_1,\ldots, y_s)\in [0,1)^s$ be the scrambled point whose $b$-adic expansion is represented by
  \begin{align*}
     y_j=\frac{y_{j,1}}{b}+\frac{y_{j,2}}{b^2}+\cdots ,
  \end{align*}
for $1\le j\le s$. Each coordinate $y_j$ is obtained by applying permutations to each digit of $x_j$. Here the permutation applied to $x_{j,k}$ depends on $x_{j,l}$ for $1\le l\le k-1$. In particular, $y_{j,1}=\pi_j(x_{j,1})$, $y_{j,2}=\pi_{j,x_{j,1}}(x_{j,2})$, $y_{j,3}=\pi_{j,x_{j,1},x_{j,2}}(x_{j,3})$, and in general
  \begin{align*}
     y_{j,k}=\pi_{j,x_{j,1},\ldots, x_{j,k-1}}(x_{j,k}) ,
  \end{align*}
where $\pi_{j,x_{j,1},\ldots, x_{j,k-1}}$ is a random permutation of $\{0,\ldots, b-1\}$. We choose permutations with different indices mutually independent from each other where each permutation is chosen uniformly distributed. Then, as shown in \cite[Proposition~2]{O95}, the scrambled point $\bsy$ is uniformly distributed in $[0,1)^s$.

In order to simplify the notation, we denote by $\Pi_j$ the set of permutations associated with $x_j$, that is,
  \begin{align*}
     \Pi_j=\{ \pi_{j,x_{j,1},\ldots, x_{j,k-1}}: k\in \nat, x_{j,1},\ldots, x_{j,k-1}\in \{0,\ldots, b-1 \}\} ,
  \end{align*}
and let $\boldsymbol{\Pi}$ be a set of $\Pi_1 \cup \ldots \cup \Pi_s$. With an abuse of notation we simply write $\bsy=\boldsymbol{\Pi}(\bsx)$ when $\bsy$ is obtained by applying Owen's scrambling to $\bsx$ using the permutations in $\boldsymbol{\Pi}$.

As can be seen from the above description, Owen's original scrambling is quite expensive to compute. In order to reduce the computational cost, various simplified scrambling schemes have been introduced which can be implemented more easily, see for example \cite{H96,Mat98,O03}. Although we only deal with Owen's original scrambling in the remainder of this paper, the simplified scramblings cited above also apply here as long as they satisfy so-called Owen's lemma \cite[Lemma~6]{D11}. 

\subsection{Higher order digital nets}

Quasi-Monte Carlo rules based on higher order digital nets exploit the smoothness of an integrand so that they can achieve the optimal order of convergence of the integration error for functions with smoothness $\alpha\in \nat$. The result is based on a bound on the decay of the Walsh coefficients of smooth functions \cite{D08}. We refer readers to \cite{D09} for a brief introduction of the central ideas. Explicit constructions of higher order digital nets and sequences were given in \cite{D08}.

There is also a component-by-component construction algorithm of higher order polynomial lattice rules \cite{BDGP11}. Higher order polynomial lattice rules can be obtained in the following way. In Definition \ref{def:polynomial_lattice}, we set $p$ with $\deg(p)=n>m$ and replace $v_m$ with $v_n$ for the mapping function. Then a higher order polynomial lattice point set consists of the first $b^m$ points of a classical polynomial lattice point set with $b^n$ points (where $n = \alpha m$ for integrands of smoothness $\alpha$). The existence of higher order polynomial lattice rules achieving the optimal order of convergence was established in \cite{DP07} and the CBC construction was proved to achieve the optimal order of convergence in \cite{BDGP11}. However, we have no generalization of the scrambling algorithm to higher order polynomial lattice rules which preserves the higher order structure. To work around this problem we use a different approach in this paper. Namely, we use the approach from \cite{D07,
D08} based on the interlacing of digital nets or sequences.

We describe the interlacing algorithm in more detail in the following. Since the interlacing is applied to each point separately, we use just one point to describe the procedure. Let $\bsz \in [0,1)^{ds}$, with  $\bsz = (z_{1},\ldots, z_{ds})$ and consider the $b$-adic expansion of each coordinate
  \begin{align*}
    z_{j}= \frac{z_{j,1}}{b} + \frac{z_{j,2}}{b^2} + \cdots ,
  \end{align*}
unique in the sense that infinitely many digits are different from $b-1$. We obtain a point $\bsx \in [0,1)^{s}$ by interlacing the digits of $d$ components of $\bsz$ in the following way: Let $\bsx =(x_{1},\ldots, x_{s})$, where
  \begin{align*}
    x_{j} = \sum_{a=1}^{\infty}\sum_{r=1}^{d} z_{(j-1)d+r,a}b^{-r-(a-1)d},
  \end{align*}
for $1\le j\le s$. We denote this mapping by $\Dcal_{d}: [0,1)^d \to [0,1)$ and we simply write $x_{j}=\Dcal_{d}(z_{(j-1)d+1},\ldots, z_{jd})$. Further we write
  \begin{align*}
    \bsx=\Dcal_d(\bsz) := (\Dcal_d(z_{1},\ldots, z_d), \Dcal_d(z_{d+1},\ldots, z_{2d}), \ldots, \Dcal_d(z_{(s-1)d+1},\ldots, z_{sd})) ,
  \end{align*}
when $\bsx$ is obtained by interlacing the components of $\bsz$. Note that the interlacing procedure depends on the base $b$. Throughout the paper we assume that the construction of polynomial lattice rules, Owen's scrambling and the interlacing of digits all use the same prime base $b$.

Order-$d$ scrambling for $\bsz \in [0,1)^{ds}$ proceeds as follows. Let $\boldsymbol{\Pi}$ be a uniformly chosen i.i.d. set of permutations. Then, the order-$d$ scrambled point $\bsy \in [0,1)^s$ is given by
  \begin{align*}
   \bsy = \Dcal_d(\boldsymbol{\Pi}(\bsz)).
\end{align*}
Hence, as stated in the previous section, we first apply Owen's scrambling to $\bsz \in [0,1)^{ds}$ and then interlace the digits of the resulting point to obtain the point $\bsy$. Again, we choose permutations with different indices mutually independent from each other where each permutation is chosen with the same probability. Then, as shown in \cite[Proposition~5]{D11}, the order-$d$ scrambled point $\bsy$ is uniformly distributed in $[0,1)^s$.

In this paper, we are interested in the use of polynomial lattice rules to generate a point set in $[0,1)^{ds}$. For clarity, we give the definition of interlaced scrambled polynomial lattice rules below.

\begin{definition}\label{def:interlacing_polynomial_lattice}
Let $b$ be a prime and $m, s, d \in \nat$. Let $p \in \FF_b[x]$ such that $\deg(p)=m$ and let $\bsq=(q_1,\ldots,q_{ds}) \in (\FF_b[x])^{ds}$. Now we construct a point set consisting of $b^m$ points in $[0,1)^s$.  For $0 \le n < b^m$, the $n$-th point is obtained by setting
  \begin{align*}
    \bsz_n = \left( v_m\left( \frac{n(x) \, q_1(x)}{p(x)} \right) , \ldots , v_m\left( \frac{n(x) \, q_{ds}(x)}{p(x)} \right) \right) \in [0,1)^{ds} .
  \end{align*}
Then let
  \begin{align*}
    \bsy_n = \Dcal_d(\boldsymbol{\Pi}(\bsz_n)),
  \end{align*}
where the permutations are chosen independently and uniformly distributed from the set $\boldsymbol{\Pi}$. We call $\{\bsy_0, \ldots, \bsy_{b^m-1}\}$ an \emph{interlaced scrambled polynomial lattice point set} (of order $d$) and a QMC rule using the point set $\{\bsy_0, \ldots, \bsy_{b^m-1}\}$ an \emph{interlaced scrambled polynomial lattice rule} (of order $d$).
\end{definition}

\subsection{The results}

We now describe the main results of this paper. In the following, let $\{\bsy_0, \ldots, \bsy_{b^m-1}\} \subset [0,1]^s$ be an interlaced scrambled polynomial lattice point set. Further, we denote by $\{\bsz_0, \ldots, \bsz_{b^m-1}\} \subset [0,1]^{ds}$ the polynomial lattice point set with $ds$ components with modulus $p \in \FF_b[x]$, $\deg(p)=m$, and with generating vector $\bsq=(q_1,\ldots,q_{ds}) \in (\FF_b[x])^{ds}$. We approximate the integral $I(f) =  \int_{[0,1]^s}f(\bsx)\rd\bsx$ by
  \begin{align*}
    \hat{I}(f) =  \frac{1}{b^m}\sum_{n=0}^{b^m-1}f(\bsy_n).
  \end{align*}
Since the order-$d$ scrambled point is uniformly distributed in $[0,1)^s$ as shown in \cite[Proposition~5]{D11}, this estimator is unbiased, that is, $\EE[\hat{I}(f)]=I(f)$. It follows that the mean square error equals the variance of the estimator. Thus, in the following, we concentrate on the variance of the estimator denoted by
  \begin{align*}
    \Var[\hat{I}(f)] = \EE\left[ \left( \hat{I}(f)-\EE[\hat{I}(f)] \right)^2 \right].
  \end{align*}

Let $\bsgamma = (\gamma_u)_{u \subseteq \{1:s\}}$ be a vector of nonnegative real numbers. These numbers are called weights and are used to model the importance of projections of functions $f:[0,1]^s \to \mathbb{R}$, where $\gamma_u$ small means that the projection of $f$ onto the components in $u$ is of little importance and vice versa. This idea stems from  \cite{SW98}. For more details see Subsection~\ref{ssec:var} below.  We assume that $f$ has smoothness $\alpha \in \mathbb{N}$ which we make precise by the assumption that $V_{\alpha, \bsgamma}(f) < \infty$. Here $V_{\alpha,\bsgamma}(f)$ is a variation of order $\alpha$ which can be related to a Sobolev norm where partial derivatives of order up to $\alpha$ in each variable are square integrable. See Subsection~\ref{ssec:var} for details on $V_{\alpha,\bsgamma}(f)$. In Corollary~\ref{corollary:bound_variance} we show that for an interlaced scrambled polynomial lattice rule of order $d$ the variance of the estimator is bounded by
\begin{equation*}
\Var[\hat{I}(f)] \le V_{\alpha, \bsgamma}^2(f) B_{\alpha,d, \bsgamma}(\bsq, p),
\end{equation*}
where $B_{\alpha,d, \bsgamma}(\bsq, p)$ is a function which depends only on the interlaced scrambled polynomial lattice rule but does not depend on $f$. The precise formula for $B_{\alpha, d, \bsgamma}(\bsq, p)$ is derived in Subsection~\ref{sec_bound_var}. In Lemma~\ref{lemma:criterion} we show that there is a concise formula for $B_{\alpha, d, \bsgamma}(\bsq, p)$ given by
  \begin{align*}
     B_{\alpha,d, \bsgamma}(\bsq, p) = \frac{1}{b^m}\sum_{n=0}^{b^m-1}\sum_{\emptyset\ne v\subseteq \{1:s\}} \gamma_{v}D_{\alpha,d}^{|v|} \prod_{j\in v}\Big[ -1+\prod_{k=1}^{d}\left(1+\phi_{\alpha,d}(z_{n,(j-1)d+k})\right)\Big],
  \end{align*}
where $D_{\alpha,d}:=4^{\max(d-\alpha,0)}b^{(2d-1)\alpha}$, and for $z\in [0,1)$, let
  \begin{align*}
     \phi_{\alpha,d}(z):= \frac{(b-1)\left(b-1-b^{2\min(\alpha,d)\lfloor \log_b z\rfloor}(b^{2\min(\alpha,d)+1}-1)\right)}{b^{\alpha}(b^{2\min(\alpha,d)}-1)} ,
  \end{align*}
where we set $b^{2\min(\alpha,d)\lfloor \log_b 0\rfloor}=0$. In particular, for product weights $\gamma_v=\prod_{j\in v}\gamma_j$, we have
  \begin{align*}
     B_{\alpha,d, \bsgamma}(\bsq, p) = -1+\frac{1}{b^m}\sum_{n=0}^{b^m-1}\prod_{j=1}^{s} \Big[ 1- \gamma_j D_{\alpha,d} + \gamma_j D_{\alpha,d} \prod_{k=1}^{d}\left(1+\phi_{\alpha,d}(z_{n,(j-1)d+k})\right)\Big] .
  \end{align*}
Thus $B_{\alpha, d, \bsgamma}(\bsq, p)$ can be used as a quality criterion for searching for good generating vectors. In the following we introduce the CBC algorithm.

The CBC construction algorithm was first introduced by Korobov \cite{K59}, and independently reinvented later by Sloan and Reztsov \cite{SR02}, to construct a generating vector of lattice rules. The same approach can be applied to polynomial lattice rules. In the following, we choose an irreducible polynomial $p$ such that $\deg(p)=m$, and restrict $q_j$, $1 \le j \le ds$, to non-zero polynomials over $\FF_b$ such that its degree is less than $m$. Without loss of generality we can set $q_1=1$. We denote by $R_{b,m}$ the set of all non-zero polynomials over $\FF_b$ with degree less than $m$, i.e.,
	\begin{align*}
		R_{b,m}=\{ q\in \FF_b[x]: \deg(q)<m\ \mathrm{and}\ q\ne0\} .
	\end{align*}
We note that $|R_{b,m}|=b^m-1$. Further, we write $\bsq_{\tau}=(q_1,\ldots, q_{\tau})$ for $1\le \tau\le ds$. The idea is now to search for the polynomials $q_j \in R_{b,m}$ component-by-component. To do so, we need to define $B_{\alpha, d, \bsgamma}(\bsq_\tau, p)$ for arbitrary $1 \le \tau \le ds$. This is done in the following way. Let $1 \le \tau \le ds$ and $\beta = \lceil \tau/d \rceil$. Then
  \begin{align*}
    & B_{\alpha,d, \bsgamma}(\bsq_\tau, p) \\ = & \frac{1}{b^m}\sum_{n=0}^{b^m-1}\sum_{\emptyset\ne v\subseteq \{1:\beta-1\}} \gamma_{v}D_{\alpha,d}^{|v|}\prod_{j\in v}\Big[ -1+\prod_{k=1}^{d}\left(1+\phi_{\alpha,d}(z_{n,(j-1)d+k})\right)\Big] \\ & + \frac{1}{b^m}\sum_{n=0}^{b^m-1}\sum_{\{\beta\} \subseteq v \subseteq \{1:\beta\}} \gamma_{v}D_{\alpha,d}^{|v|}\prod_{j\in v \setminus \{\beta\} }\Big[ -1+\prod_{k=1}^{d}\left(1+\phi_{\alpha,d}(z_{n,(j-1)d+k})\right)\Big] \\ & \times \Big[ -1+\prod_{k=1}^{\tau - (\beta-1)d}\left(1+\phi_{\alpha,d}(z_{n,(\beta-1)d+k})\right)\Big] .
  \end{align*}
For product weights we have
  \begin{align*}
&  B_{\alpha,d, \bsgamma}(\bsq_\tau, p) \\ = &  -1+\frac{1}{b^m}\sum_{n=0}^{b^m-1}\prod_{j=1}^{\beta-1} \Big[ 1- \gamma_j D_{\alpha,d} + \gamma_j D_{\alpha,d} \prod_{k=1}^{d}\left(1+\phi_{\alpha,d}(z_{n,(j-1)d+k})\right)\Big] \\ & \times \Big[ 1- \gamma_\beta D_{\alpha,d} +  \gamma_\beta D_{\alpha,d} \prod_{k=1}^{\tau-(\beta-1) d}\left(1+\phi_{\alpha,d}(z_{n,(\beta-1)d+k})\right)\Big] .
  \end{align*}

The CBC construction intended for this study proceeds as follows.
\begin{algorithm}\label{algorithm:cbc}
For a prime base $b$, a dimension $s$, an interlacing factor $d$, and integer $m\ge 1$ and weights $\bsgamma=(\gamma_u)_{u\subseteq \{1:s\}}$:
	\begin{enumerate}
		\item Choose an irreducible polynomial $p\in \FF_b[x]$ with $\deg(p)=m$.
		\item Set $q_1=1$.
		\item For $\tau=2,\ldots, ds$, find $q_{\tau}$ by minimizing $B_{\alpha,d, \bsgamma}((\bsq_{\tau-1}, \tilde{q}_{\tau}),p)$ as a function of $\tilde{q}_{\tau}\in R_{b,m}$.
	\end{enumerate}
\end{algorithm}
In Subsection~\ref{subsec_fast_cbc} we show that one can also use the fast CBC algorithm of \cite{NC06a,NC06b} to find good generating vectors.
 
Next we show that the generating vector found by Algorithm~\ref{algorithm:cbc} satisfies the bound in the following theorem.

\begin{theorem}\label{theorem:cbc_proof}
Let $b$ be a prime and $p\in \FF_b[x]$ be irreducible with $\deg(p)=m$. Suppose that $\bsq=(q_1,\ldots, q_{ds})$ is constructed using Algorithm \ref{algorithm:cbc}. Then, for all $\tau=1,\ldots, ds$ we have
\begin{align*}
& B_{\alpha,d, \bsgamma}(\bsq_{\tau},p) \\ \le  
& \frac{1}{(b^m-1)^{1/\lambda}} \left[ \sum_{\emptyset \ne u\subseteq \{1:j_0-1\}}\gamma_u^\lambda C_{\alpha,d,\lambda,d}^{|u|}+C_{\alpha,d,\lambda,d_0}\sum_{u\subseteq \{1:j_0-1\}}\gamma_{u\cup\{j_0\}}^\lambda C_{\alpha,d,\lambda,d}^{|u|}\right]^{1/\lambda} ,
\end{align*}
for all $1/(2\min(\alpha,d)+1)<\lambda\le 1$, where $\tau=(j_0-1)d+d_0$ such that $j_0, d_0 \in \nat$ and $0< d_0\le d$,
\begin{align*}
 C_{\alpha,d,\lambda,a}= D_{\alpha,d}^{\lambda}\left(-1+(1+\tilde{C}_{\alpha,d,\lambda})^a\right) ,
\end{align*}
and
\begin{align*}
 \tilde{C}_{\alpha,d,\lambda}=\max\left\{\left(\frac{(b-1)^2}{b^{\alpha}(b^{2\min(\alpha,d)}-1)}\right)^{\lambda}, \frac{(b-1)^{1+\lambda}}{b^{\lambda(\alpha-1)}(b^{(2\min(\alpha,d)+1)\lambda}-b)}\right\} .
\end{align*}	
\end{theorem}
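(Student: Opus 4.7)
The plan is to proceed by induction on $\tau$, using the $\lambda$-power (Jensen) averaging argument that is standard for CBC analyses of polynomial lattice rules (as in \cite{BDGP11,BD11,DKPS05}). The first step is to rewrite the criterion so that its dependence on the generating vector is exposed through a dual-lattice sum. Expanding each factor $1+\phi_{\alpha,d}(z)$ in Walsh series and invoking the classical polynomial-lattice character-sum identity
\begin{align*}
\frac{1}{b^m}\sum_{n=0}^{b^m-1}\wal_{\bsk}(\bsz_n) = \begin{cases}1, & \bsk \in D^\perp,\\ 0, & \bsk \notin D^\perp,\end{cases}
\end{align*}
one obtains a representation of the form
\begin{align*}
B_{\alpha,d,\bsgamma}(\bsq_\tau,p) = \sum_{\substack{\bsk \in \nat_0^\tau \setminus\{\bszero\}\\ \rtr_m(\bsk)\cdot \bsq_\tau \equiv 0 \pmod p}} \gamma_{v(\bsk)}\, 4^{|v(\bsk)|\max(d-\alpha,0)} \prod_{i=1}^{\tau} r_{\alpha,d}(k_i),
\end{align*}
where $v(\bsk)\subseteq\{1:\lceil\tau/d\rceil\}$ is the set of interlaced coordinates containing at least one $k_i\ne 0$, and $r_{\alpha,d}(k)$ denotes the $k$-th Walsh coefficient of $1+\phi_{\alpha,d}$, normalised so that $r_{\alpha,d}(0)=1$.

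Since $0<\lambda\le 1$ and the summands are non-negative, Jensen's inequality gives
\begin{align*}
B_{\alpha,d,\bsgamma}(\bsq_\tau,p)^\lambda \le \sum_{\substack{\bsk \in \nat_0^\tau \setminus\{\bszero\}\\ \rtr_m(\bsk)\cdot \bsq_\tau \equiv 0 \pmod p}} \gamma_{v(\bsk)}^\lambda\, 4^{\lambda|v(\bsk)|\max(d-\alpha,0)} \prod_{i=1}^{\tau}|r_{\alpha,d}(k_i)|^\lambda,
\end{align*}
and the induction is carried out at this $\lambda$-th power level. Splitting according to whether $k_\tau=0$ or $k_\tau\ne 0$, the $k_\tau=0$ contribution is independent of $q_\tau$ and, after accounting for the interlacing block structure, reduces to the $\tau-1$ version of the same bound, to which the inductive hypothesis applies.

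The CBC averaging step handles the $k_\tau\ne 0$ contribution. Because $q_\tau$ minimizes $B_{\alpha,d,\bsgamma}((\bsq_{\tau-1},\tilde q_\tau),p)$ over $R_{b,m}$, its $\lambda$-th power is bounded by the arithmetic mean over $\tilde q_\tau\in R_{b,m}$. Using the standard fact that, for irreducible $p$ of degree $m$ and any $\bsk$ with $k_\tau\ne 0$,
\begin{align*}
\frac{1}{|R_{b,m}|}\sum_{\tilde q_\tau\in R_{b,m}}\mathbf{1}\!\left[\rtr_m(\bsk)\cdot(\bsq_{\tau-1},\tilde q_\tau)\equiv 0 \pmod p\right]\le \frac{1}{b^m-1},
\end{align*}
the dual-lattice restriction on the $\tau$-th coordinate is removed at the cost of a factor $1/(b^m-1)$, and the inner sum over $k_\tau\in\nat$ decouples into $\sum_{k\ge 1}|r_{\alpha,d}(k)|^\lambda$. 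Estimating this tail by splitting at $k=b$ and bounding each geometric piece (with the cruder bound $(\sum_k|r_{\alpha,d}(k)|)^\lambda$ dominating for small $\lambda$) produces the two-term maximum in $\tilde{C}_{\alpha,d,\lambda}$; the restriction $\lambda>1/(2\min(\alpha,d)+1)$ is exactly what ensures that the geometric tail sums converge.

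The main technical obstacle is the bookkeeping forced by the interlacing. Writing $\tau=(j_0-1)d+d_0$ with $1\le d_0\le d$, the $j_0$-th interlaced coordinate is only partially assembled, so the sum over the active set $v$ naturally splits into (i) $v\subseteq\{1:j_0-1\}$, which produces the first sum of the theorem with the exponent $d$ in $C_{\alpha,d,\lambda,d}$ (each block $j\in v$ being fully assembled contributes a product of $d$ copies of $(1+\tilde{C}_{\alpha,d,\lambda})$), and (ii) $j_0\in v$, which produces the second sum with the exponent $d_0$ in $C_{\alpha,d,\lambda,d_0}$ (the partial block contributes only $d_0$ such factors, together with one $4^{\lambda\max(d-\alpha,0)}$ from including $j_0$ in $v$). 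Showing that the induction from $\tau-1$ to $\tau$ respects this split---in particular, that advancing $d_0-1\to d_0$ inside a block appends exactly one additional factor of $(1+\tilde{C}_{\alpha,d,\lambda})$ to the partial-block prefactor, and that crossing a block boundary ($d_0=d\to d_0=1$) correctly transfers $j_0-1$ from the partial sum into the full sum while initialising a fresh partial block at $j_0$---requires a careful case distinction and is the delicate core of the argument.
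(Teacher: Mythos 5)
Your proposal follows essentially the same route as the paper's proof: induction on $\tau$, the decomposition $B_{\alpha,d,\bsgamma}((\bsq_{\tau},\tilde{q}_{\tau+1}),p)=B_{\alpha,d,\bsgamma}(\bsq_{\tau},p)+\theta(\tilde{q}_{\tau+1})$, Jensen's inequality combined with averaging $\theta^{\lambda}$ over $\tilde{q}_{\tau+1}\in R_{b,m}$, and the block-wise bookkeeping that separates fully assembled interlaced coordinates (exponent $d$) from the partial block (exponent $d_0$), including the two cases $d_0<d$ and $d_0=d$ at the end. One step needs repair as stated: the averaging inequality $\frac{1}{|R_{b,m}|}\sum_{\tilde{q}_{\tau}}\mathbf{1}[\rtr_m(\bsk)\cdot(\bsq_{\tau-1},\tilde{q}_{\tau})\equiv 0\ (\Mod\ p)]\le \frac{1}{b^m-1}$ is false when $k_{\tau}$ is a nonzero multiple of $b^m$, since then $\rtr_m(k_{\tau})=0$ and the indicator is constant in $\tilde{q}_{\tau}$ (possibly identically $1$); the paper treats the cases $b^m\mid k_{\tau+1}$ and $b^m\nmid k_{\tau+1}$ separately and verifies that the former contribute at most $\frac{1}{b^m}\sum_{k\ge 1}r_{\alpha,d}^{\lambda}(k)$, so the final constant $\tilde{C}_{\alpha,d,\lambda}$ is unchanged. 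Relatedly, be careful with "carrying the induction at the $\lambda$-th power level": the algorithm minimizes $B$, not its Jensen majorant, so you must first use that $q_{\tau+1}$ minimizes $\theta$ and only then pass to $\theta^{\lambda}(q_{\tau+1})\le\frac{1}{b^m-1}\sum_{\tilde{q}}\theta^{\lambda}(\tilde{q})$, which is what your subsequent sentence in fact does.
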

The proof of this result is presented in Subsection~\ref{subsec:proofthm1}. 

By choosing the interlacing factor $d \ge \alpha$, Theorem~\ref{theorem:cbc_proof} implies a convergence rate of the variance $\Var[\hat{I}(f)]$ of order $N^{-2\alpha - 1 + \delta}$, for any $\delta > 0$. This rate of convergence is essentially best possible as explained in \cite{DG13} (which follows by relating $V_{\alpha, \bsgamma}$ to a Sobolev norm and then using \cite[Section~2.2.9, Proposition~1(ii)]{No88}.)

We discuss now the randomized QMC tractability properties of our constructed interlaced scrambled polynomial lattice rules. In the concept of tractability of multivariate problems, we study the dependence of $B_{\alpha,d, \bsgamma}(\bsq, p)$ on the dimension $s$ and the number of points $N=b^m$. Especially we are interested in the case when $B_{\alpha,d, \bsgamma}(\bsq, p)$ does not depend on $s$ and the case when $B_{\alpha,d, \bsgamma}(\bsq, p)$ depends polynomially on $s$. We restrict ourselves to randomized QMC rules $Q_{N,s}$ and set
\begin{equation*}
N(\varepsilon, s) = \min \{N \in \mathbb{N}: \sqrt{\EE\left[ (I(f) - Q_{N,s}(f))^2\right]} < \varepsilon \mbox{ for all } f \mbox{ with } V_{\alpha, \bsgamma}(f) \le 1\}.
\end{equation*}
Then randomized QMC polynomial tractability means that for all $\varepsilon > 0$ and $s \in \mathbb{N}$ we have
\begin{equation*}
N(\varepsilon, s) \le C \varepsilon^{-p} s^q
\end{equation*}
for some $p, q > 0$ and randomized QMC strong polynomial tractability means that the above bound holds for $q=0$. In the following we show randomized QMC strong polynomial tractability and randomized QMC polynomial tractability under certain conditions on the weights by showing that the bound $B_{\alpha, d, \bsgamma}$ is bounded independently of the dimension or depends at most polynomially on the dimension. A comprehensive introduction to tractability studies can be found in \cite{NW08, NW10}.

For $p$ and $\bsq$ constructed according to Algorithm \ref{algorithm:cbc}, we have from Theorem~\ref{theorem:cbc_proof} for $\tau = ds$
	\begin{align*}
	B_{\alpha,d, \bsgamma}(\bsq,p) \le \frac{1}{(N-1)^{1/\lambda}} \left[ \sum_{\emptyset \ne u\subseteq \{1:s\}}\gamma_u^\lambda C_{\alpha,d,\lambda,d}^{|u|}\right]^{1/\lambda} ,
	\end{align*}
for all $1/(2\min(\alpha,d)+1)<\lambda\le 1$. In case of product weights $\gamma_u=\prod_{j\in u}\gamma_j$, we have
	\begin{align*}
	B_{\alpha,d, \bsgamma}(\bsq,p) & \le \frac{1}{(N-1)^{1/\lambda}} \left[ \sum_{\emptyset \ne u\subseteq \{1:s\}}\prod_{j\in u}\gamma_j^\lambda C_{\alpha,d,\lambda,d}\right]^{1/\lambda} \\
    & = \frac{1}{(N-1)^{1/\lambda}} \left[-1+\prod_{j=1}^{s}\left( 1+\gamma_j^\lambda C_{\alpha,d,\lambda,d}\right)\right]^{1/\lambda} .
	\end{align*}
Since the term in the bracket of these bounds is independent of the number of points, $B_{\alpha,d, \bsgamma}(\bsq,p)$ depends polynomially on the number of points with its degree $-(2\min(\alpha,d)+1)<-1/\lambda\le -1$. We then have the following corollary of Theorem \ref{theorem:cbc_proof}.
\begin{corollary}\label{cor1}
Let $b$ be a prime base, $p\in \FF_b[x]$ be irreducible with $\deg(p)=m$. Suppose that $\bsq$ is constructed according to Algorithm \ref{algorithm:cbc}. Then we have the following:
\begin{enumerate}
\item For general weights, assume that
		\begin{align*}
		\lim_{s\to \infty}\sum_{\emptyset \ne u\subseteq \{1:s\}}\gamma_{u}^{\lambda}C_{\alpha,d,\lambda,d}^{|u|} < \infty ,
		\end{align*}
for some $1/(2\min(\alpha,d)+1)<\lambda\le 1$. Then $B_{\alpha,d,\bsgamma}(\bsq,p)$ is bounded independently of the dimension.

\item For general weights, assume that
		\begin{align*}
		\limsup_{s\to \infty}\left[\frac{1}{s^q}\sum_{\emptyset \ne u\subseteq \{1:s\}}\gamma_{u}^{\lambda}C_{\alpha,d,\lambda,d}^{|u|}\right] < \infty ,
		\end{align*}
for some $1/(2\min(\alpha,d)+1)<\lambda\le 1$ and $q>0$. Then the bound of $B_{\alpha,d,\bsgamma}(\bsq,p)$ depends polynomially on the dimension with its degree $q/\lambda$.

\item For product weights $\gamma_u=\prod_{j\in u}\gamma_j$, assume that
		\begin{align*}
		\sum_{j=1}^{\infty}\gamma_{j}^{\lambda} < \infty ,
		\end{align*}
for some $1/(2\min(\alpha,d)+1)<\lambda\le 1$. Then $B_{\alpha,d,\bsgamma}(\bsq,p)$ is bounded independently of the dimension.

\item For product weights $\gamma_u=\prod_{j\in u}\gamma_j$, assume that
		\begin{align*}
		A:= \limsup_{s\to \infty}\frac{\sum_{j=1}^{s}\gamma_j}{\log s} < \infty .
		\end{align*}
Then the bound of $B_{\alpha,d,\bsgamma}(\bsq,p)$ depends polynomially on the dimension with its degree $C_{\alpha,d,1,d}(A+\eta)$ for any $\eta>0$.

\end{enumerate}
\end{corollary}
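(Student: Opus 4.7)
The plan is to derive all four parts directly from the dimension-explicit bound
\begin{equation*}
B_{\alpha,d,\bsgamma}(\bsq,p) \le (N-1)^{-1/\lambda}\left[\sum_{\emptyset\ne u\subseteq\{1:s\}}\gamma_u^{\lambda}\,C_{\alpha,d,\lambda,d}^{|u|}\right]^{1/\lambda}
\end{equation*}
obtained from Theorem~\ref{theorem:cbc_proof} by taking $\tau = ds$, so that $j_0 = s$ and $d_0 = d$ (and the two sums in the theorem collapse into the single weighted sum over $u\subseteq\{1:s\}$). Since the factor $(N-1)^{-1/\lambda}$ does not involve $s$, the proof reduces to analyzing the bracket as a function of $s$ under the various weight conditions.

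For part (1), the hypothesis directly states that the bracketed sum has a finite limit as $s\to\infty$, so $B_{\alpha,d,\bsgamma}(\bsq,p)$ is bounded by a constant times $(N-1)^{-1/\lambda}$ with the constant independent of $s$. For part (2), the hypothesis says the bracket is $O(s^q)$, and raising to the power $1/\lambda$ gives a polynomial-in-$s$ bound of degree $q/\lambda$. Both parts are just a matter of quoting the respective growth assumption.

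For parts (3) and (4), I would invoke the product-weight simplification already recorded in the excerpt, namely
\begin{equation*}
\sum_{\emptyset\ne u\subseteq\{1:s\}}\gamma_u^{\lambda}\,C_{\alpha,d,\lambda,d}^{|u|} = -1+\prod_{j=1}^{s}\left(1+\gamma_j^{\lambda}\,C_{\alpha,d,\lambda,d}\right).
\end{equation*}
For (3), I would take logarithms and use $\log(1+x)\le x$ to see that the product converges as $s\to\infty$ whenever $\sum_{j=1}^{\infty}\gamma_j^{\lambda}<\infty$, which is precisely the hypothesis; hence the bracket stays bounded. For (4), I would specialize to $\lambda=1$ (which is admissible under the constraint $\lambda>1/(2\min(\alpha,d)+1)$) and combine $\prod_{j=1}^{s}(1+\gamma_j C_{\alpha,d,1,d}) \le \exp\bigl(C_{\alpha,d,1,d}\sum_{j=1}^{s}\gamma_j\bigr)$ with the $\limsup$ hypothesis: for any $\eta>0$ and all sufficiently large $s$ we have $\sum_{j=1}^{s}\gamma_j \le (A+\eta)\log s$, whence the bracket is at most $s^{C_{\alpha,d,1,d}(A+\eta)}$, yielding the claimed polynomial dependence on $s$.

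The main obstacle, such as it is, is purely a matter of verification: checking that the constraint on $\lambda$ in Theorem~\ref{theorem:cbc_proof} permits the choices made (in particular $\lambda=1$ in part (4)), and handling the absorption of the multiplicative constant $(N-1)^{-1/\lambda}$ into the tractability conclusion. No new inequalities beyond $\log(1+x)\le x$ and the $\limsup$ definition are required; the corollary is essentially a dictionary translation of Theorem~\ref{theorem:cbc_proof} into the language of tractability.
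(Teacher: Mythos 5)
Your proposal is correct and follows essentially the same route as the paper, which simply defers these routine verifications to the proofs of \cite[Theorem~3]{DSWW06} (general weights) and \cite[Corollary~4.5]{DKPS05} (product weights); your write-up supplies exactly the details those references contain, including the collapse of the two sums in Theorem~\ref{theorem:cbc_proof} at $\tau=ds$, the product-weight identity, the estimate $\log(1+x)\le x$, and the admissible choice $\lambda=1$ in part (4).
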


\begin{proof}
It is straightforward to have the results for general weights as in the proof of \cite[Theorem~3]{DSWW06} and the results for product weights by following the similar lines as the proof of \cite[Corollary~4.5]{DKPS05}.
\end{proof}

Further implications for tractability in the infinite dimensional setting of the results in this paper are discussed in more detail in \cite{DG13}, where in some cases optimal tractability results for so-called changing dimension algorithms were obtained.

\section{Variance of the estimator}
\label{Variance}

To analyze the variance of the estimator we use Walsh functions, which we introduce in the next subsection.

\subsection{Walsh functions}

Walsh functions were first introduced in \cite{W23} for the case of base 2 and were generalized later, see for instance \cite{C55}. We first give the definition for the one-dimensional case.
\begin{definition}
Let $b\ge 2$ be an integer and $\omega_b=e^{2\pi \mathrm{i}/b}$. We represent $k\in \nat_0$ in base $b$, $k = \kappa_0+\kappa_1 b+\cdots +\kappa_{a-1}b^{a-1}$ with $\kappa_z\in \{0, 1, \ldots, b-1\}$. Then, the $k$-th $b$-adic Walsh function ${}_b\wal_k: [0,1)\to \{1,\omega_b,\ldots, \omega_b^{b-1}\}$ is defined as
  \begin{align*}
    {}_b\wal_k(x)
    =
    \omega_b^{x_1\kappa_0+\cdots+x_a\kappa_{a-1}} ,
  \end{align*}
for $x\in [0,1)$ with $b$-adic expansion $x=x_1 b^{-1} + x_2 b^{-2} + \cdots$, unique in the sense that infinitely many of the $x_z$ are different from $b-1$.
\end{definition}
This definition can be generalized to higher dimensions.

\begin{definition}
For dimension $s\ge 2$, let $\bsx=(x_1,\ldots, x_s)\in [0,1)^s$ and $\bsk=(k_1,\ldots, k_s)\in \nat_0^s$. We define ${}_b\wal_{\bsk}: [0,1)^s \to \{1,\omega_b,\ldots, \omega_b^{b-1}\}$ by
  \begin{align*}
    {}_b\wal_{\bsk}(\bsx)
    =
    \prod_{j=1}^s {}_b\wal_{k_j}(x_j) .
  \end{align*}
\end{definition}
Since we will always use Walsh functions in a fixed base $b$ in the rest of this paper, we omit the subscript and simply write $\wal_k$ or $\wal_{\bsk}$.

The following important lemma relates the dual polynomial lattice to numerical integration of Walsh functions.
\begin{lemma}\label{lamma:dual_walsh}
Let $\{\bsx_0,\ldots, \bsx_{b^m-1} \}$ be a polynomial lattice point set with modulus $p\in \FF_b[x]$, $\deg(p)=m$, and generating vector $\bsq \in (\FF_b[x])^s$ and let $D^{\perp}$ be its dual polynomial lattice. Then we have
  \begin{align*}
    \frac{1}{b^m}\sum_{n=0}^{b^m-1}\wal_{\bsk}(\bsx_n)=\left\{ \begin{array}{ll}
     1 & \mathrm{if}\ \bsk\in D^{\perp } , \\
     0 & \mathrm{otherwise} . \\
     \end{array} \right.
  \end{align*}
\end{lemma}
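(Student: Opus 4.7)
The plan is to translate the Walsh-function exponents into the Laurent-series framework used to define the polynomial lattice, so that the whole sum reduces to a character sum on the finite additive group $\FF_b[x]/(p(x))$. First I would fix $n$ and analyze $\wal_{\bsk}(\bsx_n)$ coordinatewise. For each $j$, expanding $n(x)q_j(x)/p(x) = \sum_{l\ge w} t_{n,j,l}\, x^{-l}$ in $\FF_b((x^{-1}))$, the $b$-adic digits of $x_{n,j} = v_m(n(x)q_j(x)/p(x))$ are precisely $t_{n,j,1},\ldots,t_{n,j,m}$, with all subsequent digits zero. Writing $k_j = \kappa_{j,0}+\kappa_{j,1}b+\cdots$ in base $b$, the Walsh exponent at coordinate $j$ becomes
\[
\kappa_{j,0}\, t_{n,j,1} + \kappa_{j,1}\, t_{n,j,2} + \cdots \;=\; \sum_{r=0}^{m-1}\kappa_{j,r}\, t_{n,j,r+1},
\]
since digit positions of $x_{n,j}$ beyond $m$ contribute nothing. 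A direct inspection shows this is exactly the coefficient of $x^{-1}$ in $\mathrm{tr}_m(k_j)(x)\cdot n(x)q_j(x)/p(x)$, which is where the truncation $\mathrm{tr}_m$ enters. Let $\rho\colon \FF_b((x^{-1}))\to \FF_b$ denote the $\FF_b$-linear residue map extracting the $x^{-1}$-coefficient. Summing exponents across $j$ and using $\wal_{\bsk}=\prod_j\wal_{k_j}$ yields
\[
\wal_{\bsk}(\bsx_n) \;=\; \omega_b^{\rho(n(x)\,Q(x)/p(x))},\qquad Q(x) := \mathrm{tr}_m(\bsk)\cdot \bsq \;\in\; \FF_b[x].
\]

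Next I would split into two cases. If $\bsk\in D^\perp$, then $p\mid Q$ in $\FF_b[x]$, so $n(x)Q(x)/p(x)$ is an honest polynomial for every $n$ and $\rho$ annihilates it; each term in the average equals $1$ and the sum equals $1$. If $\bsk\notin D^\perp$, then, using that $p$ is irreducible and $p\nmid Q$, the polynomial $Q$ is a unit in $\FF_b[x]/(p(x))$, so the map $n\mapsto nQ\bmod p$ is a bijection on the set of polynomials of degree less than $m$. Writing $n(x)Q(x) = p(x)A(x) + R(x)$ with $\deg R < m$, the polynomial $A$ contributes nothing to $\rho$, so $\rho(nQ/p) = \rho(R/p)$, and the sum becomes
\[
\frac{1}{b^m}\sum_{R\,:\,\deg R<m}\omega_b^{\rho(R/p)}.
\]
The assignment $R\mapsto \omega_b^{\rho(R/p)}$ is an additive character of the finite group $\FF_b[x]/(p(x))$ by $\FF_b$-linearity of $\rho$, and it is non-trivial because $\rho(x^{m-1}/p(x))$ equals the reciprocal of the leading coefficient of $p$, which is nonzero in $\FF_b$. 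Hence the sum vanishes by character orthogonality.

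The main technical point is the first step: carefully unwinding the three layers of notation---the $b$-adic expansion entering $\wal$, the Laurent expansion used by $v_m$, and the polynomial inner product defining $D^{\perp}$---to see that the Walsh exponent equals $\rho(nQ/p)$ and that it is $\mathrm{tr}_m(\bsk)$, rather than $\bsk$ itself, that governs the answer. Once this identification is made, both cases collapse to standard character-sum computations on $\FF_b[x]/(p(x))$, with irreducibility of $p$ needed only to guarantee invertibility of $Q$ modulo $p$ in the second case.
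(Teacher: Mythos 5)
Your proof is correct, and it is complete down to the details: the identification of the Walsh exponent with the $x^{-1}$-coefficient of $\mathrm{tr}_m(k_j)(x)\,n(x)q_j(x)/p(x)$ is exactly right (the truncation $\mathrm{tr}_m$ enters because $v_m$ discards all Laurent coefficients beyond index $m$), the case $\bsk\in D^\perp$ is immediate, and in the complementary case the bijectivity of $n\mapsto nQ \bmod p$ together with the nontriviality of the character $R\mapsto\omega_b^{\rho(R/p)}$ (witnessed by $R=x^{m-1}$) gives the vanishing by orthogonality. The paper itself does not write out an argument: it simply cites Definition~\ref{def:dual_net} together with Lemmas~4.75 and~10.6 of \cite{DP10}. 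That route factors the claim through the general theory of digital nets: one first shows that a character sum over any digital net is $1$ or $0$ according to whether $\bsk$ lies in the dual net defined via the generating matrices, and then separately identifies the dual net of a polynomial lattice point set with the dual polynomial lattice $D^\perp$. Your argument bypasses the generating-matrix formalism entirely and works directly in $\FF_b((x^{-1}))$ with the residue functional $\rho$; this is more self-contained and arguably more transparent for polynomial lattices specifically, at the cost of not exhibiting the result as an instance of the general digital-net character property. Both are standard, and nothing in your write-up needs repair; the only cosmetic point is that for $l<w$ the coefficients $t_{n,j,l}$ should be read as $0$, which is the natural convention and changes nothing.
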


\begin{proof}
This follows immediately from Definition \ref{def:dual_net}, \cite[Lemma~10.6]{DP10} and \cite[Lemma~4.75]{DP10}.
\end{proof}

\subsection{Variance estimates}

We consider the following Walsh series expansion for $f\in L_2([0,1]^s)$
  \begin{align*}
    f(\bsx) \sim \sum_{\bsk\in \nat_0^s}\hat{f}(\bsk)\mathrm{wal}_{\bsk}(\bsx) ,
  \end{align*}
where the Walsh coefficients $\hat{f}(\bsk)$ are given by
  \begin{align*}
    \hat{f}(\bsk) = \int_{[0,1]^s}f(\bsx)\overline{\mathrm{wal}_{\bsk}(\boldsymbol{x})}\rd \bsx .
  \end{align*}

The following notation is needed for deriving the lemma below. Let $\bsl=(\bsl_1,\ldots, \bsl_s) \in \nat_0^{ds}$ where $\bsl_j=(l_{(j-1)d+1},\ldots, l_{jd})$ and
  \begin{align}\label{eq:BB_d}
     \BB_{d,\bsl,s}=\{ (k_1,\ldots, k_{ds})\in \nat_0^{ds}: \lfloor b^{l_j-1}\rfloor\le k_j< b^{l_j}\ \mathrm{for}\ 1\le j\le ds\} .
  \end{align}
In an analogous manner to $\Dcal_d$, we define a digital interlacing function $\mathcal{E}_d$ for non-negative integers. For $k_1,\ldots, k_{ds}\in \nat_0$, we represent the $b$-adic expansion of $k_j$ by $k_j=\kappa_{j,0}+\kappa_{j,0}b+\cdots $ for $1\le j\le ds$, where $\kappa_{j,u} \in \FF_b$ (and where $\kappa_{j,u}=0$ for all $u$ large enough). Then, $\mathcal{E}_d$ denotes the following mapping from $(k_1,\ldots, k_{ds})\in \nat_0^{ds}$ to $(k'_1,\ldots, k'_{s})\in \nat_0^s$, where
  \begin{align*}
    k'_j=\sum_{a=0}^{\infty}\sum_{r=1}^{d}\kappa_{(j-1)d+r,a}b^{r-1+ad} ,
  \end{align*}
for $1\le j\le s$.
Then we define the following sum of the Walsh coefficients of $f$ over $\bsk\in \mathcal{B}_{d,\bsl,s}$,
  \begin{align*}
     \sigma^2_{d,\bsl,s}(f)=\sum_{\bsk\in \BB_{d,\bsl,s}}|\hat{f}(\mathcal{E}_d(\bsk))|^2 ,
  \end{align*}
and we introduce
  \begin{align*}
     \Gamma_{\bsl,d}(\bsq, p)=\frac{1}{b^{2m}}\sum_{n,n^\prime =0}^{b^m-1}\prod_{j=1}^{ds} \EE\left[ \mathrm{wal}_{k_j}(\Pi_j(z_{n,j})\ominus \Pi_j(z_{n^\prime ,j}))\right] ,
  \end{align*}
where $\bsk = (k_1, k_2,\ldots, k_{ds}) \in \mathcal{B}_{d,\bsl,s}$ is an arbitrary element and the operator $\ominus$ denotes the digitwise subtraction modulo $b$, that is, for $x,y\in [0,1)$ with $b$-adic expansions $x=\sum_{i=1}^{\infty}x_ib^{-i}$ and $y=\sum_{i=1}^{\infty}y_ib^{-i}$, $\ominus$ is defined as
  \begin{align*}
     x\ominus y=\sum_{i=1}^{\infty}\frac{z_i}{b^i} ,
  \end{align*}
where $z_i=x_i-y_i \pmod b$. We note that $\Gamma_{\bsl,d}(\bsq, p)$ is independent of the choice of $\bsk\in \mathcal{B}_{d,\bsl,s}$, and depends only on the point set $\{\bsz_0,\ldots, \bsz_{b^m-1}\}$, see \cite{D11}. According to \cite[Lemma~7]{D11}, we have
  \begin{align}
     \Var[\hat{I}(f)] = \sum_{\bsl\in \nat_0^{ds}\setminus\{\bszero\}}\sigma^2_{d,\bsl,s}(f)\Gamma_{\bsl,d}(\bsq, p).
  \label{eq:variance}
  \end{align}

By applying the property of polynomial lattice rules to this expression of $\Var[\hat{I}(f)]$, we obtain the following lemma.

\begin{lemma} \label{lemma:variance}
Let $d\in \nat$ and $f\in L_2([0,1]^s)$. Let the estimator $\hat{I}$ be given by
  \begin{align*}
    \hat{I}(f) = \frac{1}{b^m}\sum_{n=0}^{b^m-1}f(\bsy_n),
  \end{align*}
where $\{\bsy_0,\ldots, \bsy_{b^m-1}\}$ is an interlaced scrambled polynomial lattice point set with generating vector $\bsq$ and modulus $p$. Then, we have
  \begin{align}
     \Var[\hat{I}(f)] = \sum_{\emptyset\ne u\subseteq \{1:ds\}}\frac{b^{|u|}}{(b-1)^{|u|}}\sum_{\bsl_u\in \nat^{|u|}}\frac{\sigma^2_{d,(\bsl_u,\bszero),s}(f)}{b^{|\bsl_u|_1}}\sum_{\bsk\in \mathcal{B}_{d,(\bsl_u,\bszero),s}\cap D^{\perp }}1 ,
  \label{eq:lemma_variance}
  \end{align}
where $|\bsl_u|_1=\sum_{j\in u}l_j$ and $D^\perp$ is the dual polynomial lattice for the polynomial lattice point set with generating vector $\bsq$ and modulus $p$.
\end{lemma}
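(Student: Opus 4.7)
The plan is to begin from the representation (\ref{eq:variance}) and evaluate $\Gamma_{\bsl,d}(\bsq,p)$ in closed form by combining the known action of Owen's scrambling on Walsh functions with the polynomial-lattice identity in Lemma~\ref{lamma:dual_walsh}.

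First I would isolate the coordinates on which $\bsl$ vanishes. By the definition of $\BB_{d,\bsl,s}$ in (\ref{eq:BB_d}), the condition $l_j=0$ forces $k_j=0$, so the corresponding factor $\EE[\wal_{0}(\Pi_j(z_{n,j})\ominus \Pi_j(z_{n',j}))] = 1$. Setting $u:=\{j \in \{1:ds\} : l_j \ge 1\}$ and writing $\bsl=(\bsl_u,\bszero)$, only the indices $j\in u$ contribute non-trivially to the product defining $\Gamma_{\bsl,d}(\bsq,p)$. Next, for each $j\in u$, I would invoke the standard scrambled-Walsh expectation formula (of the type used in \cite{O97a,D11}): when $b^{l_j-1}\leq k_j<b^{l_j}$, the value of $\EE[\wal_{k_j}(\Pi_j(z_{n,j})\ominus\Pi_j(z_{n',j}))]$ depends only on the number of leading common $b$-adic digits of $z_{n,j}$ and $z_{n',j}$, equaling $1$ if this number is $\geq l_j$, $-1/(b-1)$ if it equals exactly $l_j-1$, and $0$ otherwise. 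Using the indicator-sum identity
\[
\frac{1}{b^{l}}\sum_{k=0}^{b^{l}-1}\wal_{k}(z)\,\overline{\wal_{k}(z')} = \mathbf{1}\bigl[\text{the first }l\text{ $b$-adic digits of }z\text{ and }z'\text{ agree}\bigr],
\]
the three-case expression collapses into the compact Walsh representation
\[
\EE\bigl[\wal_{k_j}(\Pi_j(z_{n,j})\ominus \Pi_j(z_{n',j}))\bigr] = \frac{b}{(b-1)\,b^{l_j}}\sum_{k_j'=b^{l_j-1}}^{b^{l_j}-1}\wal_{k_j'}(z_{n,j})\,\overline{\wal_{k_j'}(z_{n',j})},
\]
in which the independence of the particular $k_j$ in the box (noted after the definition of $\Gamma_{\bsl,d}$) is now manifest.

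Substituting this into $\Gamma_{\bsl,d}(\bsq,p)$ and interchanging the orders of summation, the double sum over $n,n'$ factors as the squared modulus of $b^{-m}\sum_{n}\wal_{\bsk'}(\bsz_n)$, to which Lemma~\ref{lamma:dual_walsh} applies. This collapses the sum over the admissible $\bsk'$ to a count of elements in $\BB_{d,(\bsl_u,\bszero),s}\cap D^\perp$ and yields
\[
\Gamma_{\bsl,d}(\bsq,p) = \frac{b^{|u|}}{(b-1)^{|u|}\,b^{|\bsl_u|_1}}\sum_{\bsk\in\BB_{d,(\bsl_u,\bszero),s}\cap D^\perp}1.
\]
Plugging this back into (\ref{eq:variance}) and regrouping the outer sum over $\bsl\in\nat_0^{ds}\setminus\{\bszero\}$ by its support $u\subseteq\{1:ds\}$ (equivalently, by writing $\bsl=(\bsl_u,\bszero)$ with $\bsl_u \in \nat^{|u|}$) produces exactly (\ref{eq:lemma_variance}).

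The main technical point requiring care is the Walsh decomposition of the scrambled expectation: the three-case formula must be assembled from two indicator-function sums of different lengths with the correct normalisation, and it is precisely this computation that produces the characteristic factor $b^{|u|}/(b-1)^{|u|}$ in the statement. The boundary case $l_j=1$ (where the $\sum_{k=0}^{b^{l_j-1}-1}$ contribution reduces to the trivial term $k=0$) is the only bookkeeping subtlety; once this identity is in hand, the remainder is a routine exchange of sums followed by a direct appeal to Lemma~\ref{lamma:dual_walsh}.
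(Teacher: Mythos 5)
Your argument is correct and is, in substance, the same as the paper's: the paper disposes of this lemma by citing \cite[Corollary~13.7]{DP10}, and what you have written out is precisely the derivation behind that corollary, namely evaluating the gain coefficients $\Gamma_{\bsl,d}(\bsq,p)$ via the scrambled-Walsh expectation identity and then collapsing the double sum over $n,n'$ with Lemma~\ref{lamma:dual_walsh}. Your Walsh-representation of the three-case expectation, including the normalising factor $b/((b-1)b^{l_j})$ that produces $b^{|u|}/(b-1)^{|u|}$, checks out, so the proposal is a complete self-contained proof of the statement.
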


\begin{proof}
This follows immediately from \cite[Corollary~13.7]{DP10}.
\end{proof}

\subsection{A bound on the Walsh coefficients}\label{ssec:var}

Below we define a variation $V_{\alpha}^{(s)}(f)$ of order $\alpha \ge 1$ for functions $f:[0,1]^s\to\mathbb{R}$. See  \cite[pp.~1386, 1387]{D11} for a derivation of this definition. In particular, in \cite{D11} it is shown that if the partial derivatives $\frac{\partial^{\alpha_1+\cdots + \alpha_s} f}{\partial x_1^{\alpha_1} \cdots \partial x_s^{\alpha_s}}$ are continuous for a given $\boldsymbol{\alpha}=(\alpha_1,\ldots, \alpha_s) \in \{1:\alpha\}^s$, then
\begin{equation*}
V_{\boldsymbol{\alpha}}^{(s)}(f) = \left( \int_{[0,1]^s} \left| \frac{\partial^{\alpha_1+\cdots + \alpha_s} f}{\partial x_1^{\alpha_1} \cdots \partial x_s^{\alpha_s}} \right|^2 \rd \bsx \right)^{1/2}.
\end{equation*}

To define the variation $V_{\alpha}^{(s)}$, let $J = \prod_{j=1}^{\alpha s} [a_j b^{-l_j}, (a_j+1) b^{-l_j})$, where $0 \le a_j < b^{l_j}$ and $l_j \in \mathbb{N}$ for $1 \le j \le \alpha s$. The set $\mathcal{D}_\alpha(J) = \{\Dcal_\alpha(\bsx): \bsx \in J\}$ is the product of a union of intervals except for a countable number of points (see \cite{D11}). Let $\boldsymbol{\alpha} \in \{1:\alpha\}^s$. For $\bst \in [0,1)^s$ and $\bsx_1,\ldots, \bsx_s \in (-1,1)^s$ we define the difference operator
\begin{align*}
\Delta_{\bsalpha}(\bst; \bsx_1,\ldots, \bsx_s) f  = & \sum_{v_1 \subseteq \{1:\alpha_1\}} \cdots \sum_{v_s \subseteq \{1:\alpha_s\}} (-1)^{|v_1| + \cdots + |v_s|}  \\ & \times f\left(t_1 + \sum_{i_1 \in v_1} x_{1,i_1}, \ldots, t_s + \sum_{i_s \in v_s} x_{s, i_s}\right).
\end{align*}
Then we define the generalized Vitali variation
\begin{equation*}
V_{\boldsymbol{\alpha}}^{(s)}(f) = \sup_{\mathcal{P}} \left(\sum_{J \in \mathcal{P}} \mathrm{Vol}(\mathcal{D}_\alpha(J)) \sup \left|\frac{\Delta_{\boldsymbol{\alpha}}(\bst; \bsx_1,\ldots, \bsx_s) f}{\prod_{j=1}^s \prod_{r=1}^{\alpha_j} x_{j,r} }\right|^2 \right)^{1/2},
\end{equation*}
where the first supremum $\sup_{\mathcal{P}}$ is over all partitions of $[0,1)^{\alpha s}$ into subcubes of the form $J = \prod_{j=1}^{\alpha s} [ a_j b^{-l_j}, (a_j+1) b^{-l_j})$ with $0 \le a_j < b^{l_j}$ and $l_j \in \mathbb{N}$ for $1 \le j \le \alpha s$, and the second supremum is taken over all $\bst \in \mathcal{D}_\alpha(J)$ and $\bsx_j = (x_{j,1},\ldots, x_{j,\alpha_j})$ with $x_{j,r} = \tau_{j,r} b^{-\alpha(l_j-1)-r}$ where $\tau_{j,r} \in \{1-b, \ldots, b-1\} \setminus \{0\}$ for $1 \le r \le \alpha_j$ and $1 \le j \le s$ and such that for all the points at which $f$ is evaluated in $\Delta_{\boldsymbol{\alpha}}(\bst; \bsx_1,\ldots, \bsx_s)$ are in $\mathcal{D}_\alpha(\prod_{j=1}^{\alpha s} [ b^{-l_j+1} \lfloor a_j/b\rfloor, b^{-l_j+1} \lfloor a_j/b \rfloor + 1))$.

For $ \emptyset \neq u \subseteq \{1:s\}$ let $|u|$ denote the number of elements in $u$ and let $V_{\boldsymbol{\alpha}}^{(|u|)}(f_u; u)$ denote the generalized Vitali variation with coefficient $\boldsymbol{\alpha} \in \{1:\alpha\}^{|u|}$ of the $|u|$-dimensional function
$$f_u(\bsx_u) = \int_{[0,1]^{s-|u|}} f(\bsx) \rd \bsx_{\{1:s\}\setminus u}.$$ For $u = \emptyset$ we set $f_\emptyset = \int_{[0,1]^s} f(\bsx) \rd\bsx$ and we define $V_{\boldsymbol{\alpha}}^{(0)}(f_\emptyset; \emptyset) = |f_{\emptyset}|$.
Now we define the generalized weighted Hardy and Krause variation of $f$ of order $\alpha$ by (cf. \cite[p. 1387]{D11})
\begin{equation*}
V_{\alpha, \bsgamma}(f) = \left(\sum_{u \subseteq \{1:s\}} \gamma_u^{-1} \sum_{\bsalpha \in \{1:\alpha\}^{|u|}} (V_{\boldsymbol{\alpha}}^{(|u|)}(f_u; u))^2 \right)^{1/2},
\end{equation*}
where $(\gamma_u)_{u \in U}$ is a sequence of nonnegative real numbers and $U = \{u \subset \mathbb{N}: |u| < \infty\}$.

Let $f:[0,1]^s \to \mathbb{R}$ and let
\begin{equation*}
f(\bsx) = \sum_{u \subseteq \{1:s\}} g_u(\bsx_u)
\end{equation*}
denote the ANOVA decomposition of $f$, that is, $g_\emptyset = \int_{[0,1]^s} f(\bsx)\rd \bsx$ and
\begin{equation*}
g_u(\bsx_u) = f_u - \sum_{v \subset u} g_v,
\end{equation*}
where $v \subset u$ means that $v$ is a proper subset of $u$. We have $\int_0^1 g_u(\bsx_u) \rd x_j = 0$ for $j \in u$ and $\frac{\partial g_u}{\partial x_j} = 0$ for $j \notin u$. Then we have
\begin{equation*}
V_{\alpha, \bsgamma}(g_u) = \gamma_u^{-1/2} V_\alpha^{(|u|)}(g_u; u).
\end{equation*}

Let $\bsl=(l_1,\ldots, l_{ds})\in \nat_0^{ds}$ and let $u=\{i\in \{1:ds\}: l_i>0\}$. Then we denote $\bsl$ by $(\bsl_u,\bszero)$. Let $v(u) \subseteq \{1:s\}$ denote the set of $1 \le i \le s$ such that $u \cap \{(i-1)d+1:id\} \neq \emptyset$. Then
\begin{equation*}
\sigma_{d,(\bsl_u, \bszero),s}(g_v) = 0 \quad \mbox{if } v \neq v(u).
\end{equation*}
Thus we have
\begin{equation*}
\sigma_{d, (\bsl_u,\bszero),s}(f) = \sigma_{d,(\bsl_u,\bszero),s}(g_{v(u)}).
\end{equation*}

Using \cite[Lemma~9]{D11}, we therefore obtain
\begin{equation*}
\sigma_{d, (\bsl_u, \bszero), s}(f) = \sigma_{d,(\bsl_u,\bszero),s}(g_{v(u)}) \le 2^{|v(u)| \max(d-\alpha,0)} \beta(\bsl_u,\bszero) \sqrt{\gamma_{v(u)}} V_{\alpha, \bsgamma}(f),
\end{equation*}
where the definition of $\beta(\bsl_v,\bszero)$ is given as follows. Let $u_i=u\cap \{(i-1)d+1: id\}$ and $\alpha_i=\min(\alpha,|u_i|)$ for $i\in v(u)$. Let $\beta'_j=(b-1)b^{-j+(i-1)d-(l_j-1)d}$ for $j\in u_i$ and $1\le i\le s$. Let $\beta_{i,1}<\beta_{i,2}<\cdots < \beta_{i,|u_i|}$ for $i\in v(u)$ be such that $\{\beta_{i,1},\ldots, \beta_{i,|u_i|}\}=\{\beta'_j: j\in u_i\}$, that is $\{\beta_{i,j}: 1\le j\le |u_i|\}$ is just a reordering of the elements of the set $\{\beta'_j: j\in u_i\}$. We define $\beta(\bsl_u,\bszero)$ as
\begin{equation*}
\beta(\bsl_u,\bszero)=\prod_{i\in v(u)}\prod_{j=1}^{\alpha_i}\beta_{i,j} .
\end{equation*}
In the following lemma we provide a bound on the coefficients $\beta(\bsl_u, \bszero)$.

\begin{lemma}\label{lemma:gamma_bound}
Let $\alpha, d, s\in \nat$. For any $\emptyset \ne u\subseteq \{1:s\}$ and $(\bsl_u,\bszero)\in \nat_0^{ds}$ such that $l_j>0$ for $j\in u$, let $\beta(\bsl_u,\bszero)$ be given as above. Then we have
\begin{equation*}
\beta(\bsl_u,\bszero) \le b^{(2d-1)\alpha |v(u)|/2}\prod_{j\in u}(b-1)b^{-\min(\alpha,d)l_j-\alpha/2}.
\end{equation*}
\end{lemma}

\begin{proof}
First, we consider the case $d\le \alpha$. Since $|u_i|\le d\le \alpha$ in this case, we always have $\alpha_i=|u_i|$. Thus, from the definition of $\beta(\bsl_u,\bszero)$, we have
  \begin{align}
     \beta(\bsl_u,\bszero) & = \prod_{i\in v(u)}\prod_{j\in u_i}(b-1)b^{-j+(i-1)d-(l_j-1)d} \nonumber \\
     & \le \prod_{i\in v(u)} \left( \prod_{j\in u_i}(b-1)b^{-dl_j} \right) \left( \prod_{h=1}^{|u_i|}b^{d-h} \right) \nonumber \\
     & = \prod_{i\in v(u)}b^{d|u_i|-|u_i|(|u_i|+1)/2}\prod_{j\in u_i}(b-1)b^{-dl_j} \nonumber \\
     & \le \prod_{i\in v(u)}b^{d\alpha-\alpha(|u_i|+1)/2}\prod_{j\in u_i}(b-1)b^{-dl_j} \nonumber \\
     & = b^{(2d-1)\alpha|v(u)|/2}\prod_{j\in u}(b-1)b^{-dl_j-\alpha/2} .
  \label{eq:gamma_alpha_d}
  \end{align}

Next, we consider the case $d>\alpha$. Since $0<\beta_{i,1}<\beta_{i,2}<\cdots < \beta_{i,|u_i|}$ for every $i\in v(u)$, it holds that
\begin{equation*}
     \prod_{j=1}^{\alpha_i}\beta_{i,j} \le \left( \prod_{j=1}^{|u_i|}\beta_{i,j} \right)^{\alpha_i/|u_i|} = \prod_{j=1}^{|u_i|}\beta_{i,j}^{\alpha_i/|u_i|} .
\end{equation*}
Thus we have
  \begin{align}
     \beta(\bsl_u,\bszero) & \le \prod_{i\in v(u)}\prod_{j=1}^{|u_i|}\beta_{i,j}^{\alpha_i/|u_i|} \nonumber \\
     & \le \prod_{i\in v(u)}\prod_{j\in u_i}(b-1)^{\alpha_i/|u_i|}b^{-d\alpha_i l_j/|u_i|}\prod_{h=1}^{|u_i|}b^{(d-h)\alpha_i/|u_i|} \nonumber \\
     & = \prod_{i\in v(u)}b^{d\alpha_i -\alpha_i (|u_i|+1)/2}\prod_{j\in u_i}(b-1)^{\alpha_i/|u_i|}b^{-d\alpha_i l_j/|u_i|}.
  \label{eq:gamma_d_alpha}
  \end{align}
Since $\alpha_i=\min(\alpha,|u_i|)$, we have $\alpha_i/|u_i|\le 1$ and $d\alpha_i/|u_i|\ge \alpha$. The latter inequality is obtained as follows: If $\alpha < |u_i| \le d$, then $\alpha_i=\alpha$ and $d\alpha_i/|u_i|= d\alpha/|u_i|\ge \alpha$. Otherwise if $|u_i| \le \alpha < d$, then $\alpha_i=|u_i|$ and $d\alpha_i/|u_i|= d > \alpha$.

Applying the inequalities $\alpha_i/|u_i|\le 1$ and $d\alpha_i/|u_i|\ge \alpha$ to (\ref{eq:gamma_d_alpha}), we have
  \begin{align}
     \beta(\bsl_u,\bszero) & \le \prod_{i\in v(u)}b^{d\alpha -\alpha (|u_i|+1)/2}\prod_{j\in u_i}(b-1)b^{-\alpha l_j} \nonumber \\
     & = b^{(2d-1)\alpha|v(u)|/2}\prod_{j\in u}(b-1)b^{-\alpha l_j-\alpha/2} .
  \label{eq:gamma_d_alpha_2}
  \end{align}
Combining (\ref{eq:gamma_alpha_d}) and (\ref{eq:gamma_d_alpha_2}), the result follows.
\end{proof}

\subsection{A bound on the variance}\label{sec_bound_var}

Using Lemma \ref{lemma:variance} and the bound on the Walsh coefficients given in the previous subsection, we can now prove a bound on the variance of the estimator. We can then use this bound to introduce a quality criterion for the construction of interlaced scrambled polynomial lattice rules.

Let us define
  \begin{align*}
     r_{\alpha,d}(k):=\left\{ \begin{array}{ll}
     1 & \mathrm{if}\ k=0, \\
     (b-1)b^{-(2\min(\alpha,d)+1)\mu(k)-\alpha+1} & \mathrm{otherwise}, \\
     \end{array} \right.
  \end{align*}
where we introduce the weight $\mu(k):=a$ for $k=\kappa_0+\kappa_1 b+\cdots + \kappa_{a-1} b^{a-1}$ such that $\kappa_{a-1}\ne 0$. 
For $\bsk=(k_1,\ldots, k_{ds})\in \nat_0^{ds}$, let $r_{\alpha,d}(\bsk)=\prod_{j=1}^{ds}r_{\alpha,d}(k_j)$. The following corollary gives a bound on the variance of the estimator.

\begin{corollary}\label{corollary:bound_variance}
Let $\alpha,d \in \nat$. Let $f:[0,1]^s\to \RR$ satisfy $V_{\alpha, \bsgamma}(f)< \infty$. Let the estimator $\hat{I}$ be given by
  \begin{align*}
    \hat{I}(f) = \frac{1}{b^m}\sum_{n=0}^{b^m-1}f(\bsy_n),
  \end{align*}
where $\{\bsy_0,\ldots, \bsy_{b^m-1}\}$ is an interlaced scrambled polynomial lattice point set of order $d \ge 1$ with generating vector $\bsq \in (\mathbb{F}_b[x])^{ds}$ and modulus $p$. Then we have
  \begin{align*}
     \Var[\hat{I}(f)] \le V^2_{\alpha, \bsgamma}(f)\sum_{\emptyset\ne u\subseteq \{1:ds\}} \gamma_{v(u)}D_{\alpha,d}^{|v(u)|}\sum_{\substack{\bsk_u\in \nat^{|u|}\\ (\bsk_u,\bszero)\in D^{\perp }}}r_{\alpha,d}(\bsk_u,\bszero) ,
  \end{align*}
where $D^\perp$ is a dual polynomial lattice of the polynomial lattice rule with generating vector $\bsq$ and modulus $p$ as in Definition \ref{def:dual_net}, $v(u) \subseteq \{1:s\}$ is the set of all $i \in \{1:s\}$ such that $u \cap \{(i-1)d+1:id\} \neq \emptyset$, and $D_{\alpha,d}:=4^{\max(d-\alpha,0)}b^{(2d-1)\alpha}$.
\end{corollary}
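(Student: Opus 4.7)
The plan is to start from the exact variance identity of Lemma~\ref{lemma:variance}, replace each $\sigma^2_{d,(\bsl_u,\bszero),s}(f)$ by the weighted Vitali bound recorded in Subsection~\ref{ssec:var}, then use Lemma~\ref{lemma:gamma_bound} to trade the $\bsl$-dependent quantity $\beta^2(\bsl_u,\bszero)$ for a $\bsk$-dependent product, and finally collapse the nested sums over $\bsl_u$ and $\bsk$ into a single sum over $\bsk_u$.

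More concretely, the inequality
\begin{equation*}
\sigma_{d,(\bsl_u,\bszero),s}(f)\le 2^{|v(u)|\max(d-\alpha,0)}\,\beta(\bsl_u,\bszero)\,\sqrt{\gamma_{v(u)}}\,V_{\alpha,\bsgamma}(f)
\end{equation*}
(derived in the text preceding Lemma~\ref{lemma:gamma_bound}), squared and combined with Lemma~\ref{lemma:gamma_bound}, gives for every $\bsk\in\mathcal{B}_{d,(\bsl_u,\bszero),s}$
\begin{equation*}
\sigma^2_{d,(\bsl_u,\bszero),s}(f)\le 4^{|v(u)|\max(d-\alpha,0)}\,\gamma_{v(u)}\,V^2_{\alpha,\bsgamma}(f)\prod_{j\in u}b^{-2\min(\alpha,d)\mu(k_j)}.
\end{equation*}
Since the left-hand side does not depend on $\bsk$, this bound can be inserted inside the inner sum of the identity from Lemma~\ref{lemma:variance}.

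Next I would reparametrize the double sum $\sum_{\bsl_u\in\nat^{|u|}}\sum_{\bsk\in\mathcal{B}_{d,(\bsl_u,\bszero),s}\cap D^\perp}$ as a single sum $\sum_{\bsk_u\in\nat^{|u|},\,(\bsk_u,\bszero)\in D^\perp}$. The key observation is that as $\bsl_u$ ranges over $\nat^{|u|}$, the boxes $\mathcal{B}_{d,(\bsl_u,\bszero),s}$ partition $\{\bsk\in\nat_0^{ds}:k_j\ge 1\text{ for }j\in u,\,k_j=0\text{ for }j\notin u\}$, and on each box one has $\mu(k_j)=l_j-1$ for $j\in u$. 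Consequently $b^{|\bsl_u|_1}=b^{|u|}\prod_{j\in u}b^{\mu(k_j)}$, and the accumulated scalar factors simplify to
\begin{equation*}
\frac{b^{|u|}}{(b-1)^{|u|}}\cdot\frac{\prod_{j\in u}b^{-2\min(\alpha,d)\mu(k_j)}}{b^{|\bsl_u|_1}}=\frac{1}{(b-1)^{|u|}}\prod_{j\in u}b^{-(2\min(\alpha,d)+1)\mu(k_j)}=\frac{r_{\alpha,d}(\bsk_u,\bszero)}{(b-1)^{|u|}},
\end{equation*}
where I used $r_{\alpha,d}(0)=1$ so that the zero entries of $(\bsk_u,\bszero)$ contribute trivially. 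Finally, bounding $1/(b-1)^{|u|}\le 1$ (valid for every prime base $b\ge 2$) removes the last unwanted factor and yields the inequality claimed in the corollary.

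The main obstacle is essentially bookkeeping rather than analysis: one must carefully distinguish which quantities depend on $\bsl_u$ and which on $\bsk$, verify that Lemma~\ref{lemma:gamma_bound} legitimately upgrades the $\bsl$-dependent estimate on $\beta^2(\bsl_u,\bszero)$ to the $\bsk$-dependent product needed to produce $r_{\alpha,d}(\bsk_u,\bszero)$, and check that the boxes $\mathcal{B}_{d,(\bsl_u,\bszero),s}$ genuinely partition the relevant subset of $\nat_0^{ds}$ so that swapping and fusing the sums is lossless. Once these are in place, the assembly of Lemma~\ref{lemma:variance}, the $\sigma^2$ bound, Lemma~\ref{lemma:gamma_bound}, and the definition of $r_{\alpha,d}$ is a direct calculation, and no further estimates are required.
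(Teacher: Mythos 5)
Your proposal is correct and follows essentially the same route as the paper: insert the squared Vitali bound together with Lemma~\ref{lemma:gamma_bound} into the identity of Lemma~\ref{lemma:variance}, fuse the sums over $\bsl_u$ and $\bsk$ using $\mu(k_j)=l_j-1$ on each box, and discard the factor $(b-1)^{-|u|}\le 1$. The only difference is that you spell out the partition and bookkeeping steps that the paper leaves implicit.
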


\begin{proof}
From the bound on the Walsh coefficients given in the previous subsection and Lemma \ref{lemma:gamma_bound}, we have
  \begin{align*}
     \sigma^2_{d,(\bsl_u,\bszero),s}(f) & \le V^2_{\alpha, \bsgamma}(f) \gamma_{v(u)} D_{\alpha,d}^{|v(u)|}\prod_{j\in u}(b-1)^2b^{-2\min(\alpha,d) l_j-\alpha} \\
     & = V^2_{\alpha, \bsgamma}(f) \gamma_{v(u)} D_{\alpha,d}^{|v(u)|}\frac{(b-1)^{2|u|}}{b^{2\min(\alpha,d) |\bsl_u|_1+\alpha |u|}} .
  \end{align*}
We note that it holds that $\mu(k_j)=l_j$ for all $(\bsk_u,\bszero) \in \mathcal{B}_{d,(\bsl_u,\bszero),s}$. Inserting the above inequality into (\ref{eq:lemma_variance}), we have
  \begin{align*}
     \Var[\hat{I}(f)] & \le V^2_{\alpha, \bsgamma}(f) \sum_{\emptyset\ne u\subseteq \{1:ds\}}\gamma_{v(u)} D_{\alpha,d}^{|v(u)|}\sum_{\bsl_u\in \nat^{|u|}}\frac{(b-1)^{|u|}}{b^{(2\min(\alpha,d)+1) |\bsl_u|_1+\alpha |u|-|u|}}\sum_{\bsk\in \mathcal{B}_{d,(\bsl_u,\bszero),s}\cap D^{\perp }}1 \\
     & = V^2_{\alpha, \bsgamma}(f)\sum_{\emptyset\ne u\subseteq \{1:ds\}} \gamma_{v(u)}D_{\alpha,d}^{|v(u)|}\sum_{\substack{\bsk_u\in \nat^{|u|}\\ (\bsk_u,\bszero)\in D^{\perp }}}r_{\alpha,d}(\bsk_u,\bszero) .
  \end{align*}
\end{proof}

We denote the double sum in Corollary \ref{corollary:bound_variance} by
  \begin{align}
     B_{\alpha,d, \bsgamma}(\bsq, p) := \sum_{\emptyset\ne u\subseteq \{1:ds\}} \gamma_{v(u)}D_{\alpha,d}^{|v(u)|}\sum_{\substack{\bsk_u\in \nat^{|u|}\\ (\bsk_u,\bszero)\in D^{\perp }}}r_{\alpha,d}(\bsk_u,\bszero) .
  \label{eq:criterion}
  \end{align}
This value depends on the smoothness $\alpha$, the weights $(\gamma_u)_{u\subseteq \{1:s\}}$, both of which come from the function space, the interlacing factor $d$ and the polynomial lattice rule with $ds$ components. We note that it is independent of a particular function $f$. Thus, it is possible to use $B_{\alpha,d, \bsgamma}(\bsq, p)$ as a quality criterion for the construction of interlaced scrambled polynomial lattice rules. The following lemma gives a more computable form of $B_{\alpha,d, \bsgamma}(\bsq, p)$.

\begin{lemma}\label{lemma:criterion}
For $z\in [0,1)$, let
  \begin{align*}
     \phi_{\alpha,d}(z):= \frac{(b-1)\left(b-1-b^{2\min(\alpha,d)\lfloor \log_b z\rfloor}(b^{2\min(\alpha,d)+1}-1)\right)}{b^{\alpha}(b^{2\min(\alpha,d)}-1)} ,
  \end{align*}
where we set $b^{2\min(\alpha,d)\lfloor \log_b 0\rfloor}=0$. Let $B_{\alpha,d, \bsgamma}(\bsq, p)$ be given by (\ref{eq:criterion}). Then, we have
  \begin{align*}
     B_{\alpha,d, \bsgamma}(\bsq, p) = \frac{1}{b^m}\sum_{n=0}^{b^m-1}\sum_{\emptyset\ne v\subseteq \{1:s\}} \gamma_{v}D_{\alpha,d}^{|v|}\prod_{j\in v}\Big[ -1+\prod_{k=1}^{d}\left(1+\phi_{\alpha,d}(z_{n,(j-1)d+k})\right)\Big] .
  \end{align*}
In particular, for product weights $\gamma_v=\prod_{j\in v}\gamma_j$, we have
  \begin{align*}
     B_{\alpha,d, \bsgamma}(\bsq, p) = -1+\frac{1}{b^m}\sum_{n=0}^{b^m-1}\prod_{j=1}^{s} \Big[ 1- \gamma_j D_{\alpha,d} + \gamma_jD_{\alpha,d}\prod_{k=1}^{d}\left(1+\phi_{\alpha,d}(z_{n,(j-1)d+k})\right)\Big] .
  \end{align*}
\end{lemma}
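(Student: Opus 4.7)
My plan is to express $B_{\alpha,d,\bsgamma}(\bsq,p)$ as an average over the point set by encoding the dual-lattice membership condition via Walsh characters, then regroup the sum over $u\subseteq\{1:ds\}$ into products over the $d$-blocks, and finally identify a one-variable Walsh series with $\phi_{\alpha,d}$.

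\textbf{Step 1: Encode $(\bsk_u,\bszero)\in D^\perp$ via Walsh characters.}
By Lemma~\ref{lamma:dual_walsh} applied to the polynomial lattice point set $\{\bsz_0,\ldots,\bsz_{b^m-1}\}$ in dimension $ds$, we have $\mathbf{1}_{D^\perp}(\bsk)=b^{-m}\sum_{n=0}^{b^m-1}\wal_{\bsk}(\bsz_n)$. Since $r_{\alpha,d}(\bsk_u,\bszero)=\prod_{j\in u}r_{\alpha,d}(k_j)$ and $\wal_{(\bsk_u,\bszero)}(\bsz_n)=\prod_{j\in u}\wal_{k_j}(z_{n,j})$, the inner sum in (\ref{eq:criterion}) becomes
\begin{align*}
\sum_{\substack{\bsk_u\in\nat^{|u|}\\(\bsk_u,\bszero)\in D^\perp}}r_{\alpha,d}(\bsk_u,\bszero)=\frac{1}{b^m}\sum_{n=0}^{b^m-1}\prod_{j\in u}\Phi(z_{n,j}),
\end{align*}
where $\Phi(z):=\sum_{k=1}^\infty r_{\alpha,d}(k)\wal_k(z)$, provided this series converges absolutely for the values $z=z_{n,j}$ that appear (which I will verify in Step~3). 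Interchange of the $\bsk_u$-sum and the $n$-sum is justified by finiteness of the outer summation.

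\textbf{Step 2: Regroup by blocks of size $d$.}
Every nonempty $u\subseteq\{1:ds\}$ decomposes uniquely via $u_j:=u\cap\{(j-1)d+1:jd\}$ for $j\in\{1:s\}$, and $v(u)=\{j:u_j\neq\emptyset\}$. So
\begin{align*}
B_{\alpha,d,\bsgamma}(\bsq,p)=\frac{1}{b^m}\sum_{n=0}^{b^m-1}\sum_{\emptyset\ne v\subseteq\{1:s\}}4^{|v|\max(d-\alpha,0)}\gamma_v\prod_{j\in v}\Biggl(\sum_{\emptyset\ne u_j\subseteq\{(j-1)d+1:jd\}}\prod_{r\in u_j}\Phi(z_{n,r})\Biggr).
\end{align*}
The inner parenthesized factor equals $-1+\prod_{k=1}^d(1+\Phi(z_{n,(j-1)d+k}))$ by the binomial identity $\sum_{w\subseteq A}\prod_{r\in w}a_r=\prod_{r\in A}(1+a_r)$ with the empty-set term removed. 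This matches the claimed formula provided $\Phi=\phi_{\alpha,d}$.

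\textbf{Step 3: Identify $\Phi(z)=\phi_{\alpha,d}(z)$.}
Write $\beta=2\min(\alpha,d)+1$ and $\gamma=\beta-1=2\min(\alpha,d)$. Grouping by $a=\mu(k)$ gives
\begin{align*}
\Phi(z)=\sum_{a=0}^\infty b^{-\beta a}\sum_{k:\mu(k)=a}\wal_k(z)=\sum_{a=0}^\infty b^{-\beta a}\Bigl(\textstyle\sum_{k=0}^{b^{a+1}-1}\wal_k(z)-\sum_{k=0}^{b^a-1}\wal_k(z)\Bigr).
\end{align*}
Using the standard identity $\sum_{k=0}^{b^a-1}\wal_k(z)=b^a\,\mathbf{1}_{[0,b^{-a})}(z)$ (which follows from the orthogonality of $b$-adic Walsh characters on $[0,b^{-a})$), for $z\in[b^{-t-1},b^{-t})$ with $t\ge 0$ the sum telescopes into a finite geometric series yielding
\begin{align*}
\Phi(z)=\frac{(b-1)-b^{1-\gamma t}+b^{-\gamma(t+1)}}{1-b^{-\gamma}}.
\end{align*}
Since $\lfloor\log_b z\rfloor=-t-1$, so $b^{\gamma\lfloor\log_b z\rfloor}=b^{-\gamma(t+1)}$, a direct algebraic check shows this equals $\phi_{\alpha,d}(z)$. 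For $z=0$ every Walsh value equals $1$, the series becomes $(b-1)\sum_{a\ge 0}b^{-\gamma a}=(b-1)/(1-b^{-\gamma})$, which matches $\phi_{\alpha,d}(0)$ under the convention $b^{\gamma\lfloor\log_b 0\rfloor}=0$. Since $\Phi$ is bounded on $[0,1)$ (the only blow-up would be at $z=0$, where convergence is still absolute), the formal manipulations in Step~1 are justified.

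\textbf{Step 4: Product weights.}
For $\gamma_v=\prod_{j\in v}\gamma_j$, writing $F_j:=4^{\max(d-\alpha,0)}\gamma_j[-1+\prod_{k=1}^d(1+\phi_{\alpha,d}(z_{n,(j-1)d+k}))]$ and invoking $\sum_{\emptyset\ne v\subseteq\{1:s\}}\prod_{j\in v}F_j=-1+\prod_{j=1}^s(1+F_j)$ gives the announced product form. The main technical obstacle is the Walsh-series evaluation in Step~3; the rest is essentially combinatorial bookkeeping.
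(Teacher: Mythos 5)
Your proposal is correct and follows essentially the same route as the paper: encode the dual-lattice condition via Lemma~\ref{lamma:dual_walsh}, factor the resulting sum into one-dimensional Walsh series, evaluate that series to obtain $\phi_{\alpha,d}$ (the paper cites the analogous computation in Baldeaux's thesis, which you carry out explicitly and correctly, including the $z=0$ convention), and regroup the subsets of $\{1:ds\}$ by their induced block sets $v(u)$ to get the product $-1+\prod_{k=1}^{d}(1+\phi_{\alpha,d}(\cdot))$, with the product-weight case following from the standard sum-over-subsets identity. No gaps; the argument matches the paper's proof step for step.
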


\begin{proof}
Using Lemma \ref{lamma:dual_walsh}, we can rewrite (\ref{eq:criterion}) as follows
  \begin{align}
      B_{\alpha,d, \bsgamma}(\bsq, p) \nonumber & = \sum_{\emptyset\ne u\subseteq \{1:ds\}}\gamma_{v(u)}D_{\alpha,d}^{|v(u)|}\sum_{\bsk_u\in \nat^{|u|}}r_{\alpha,d}(\bsk_u,\bszero)\frac{1}{b^m}\sum_{n=0}^{b^m-1}\mathrm{wal}_{(\bsk_u,\bszero)}(\bsz_n) \nonumber \\
      & = \frac{1}{b^m}\sum_{n=0}^{b^m-1}\sum_{\emptyset\ne u\subseteq \{1:ds\}} \gamma_{v(u)}D_{\alpha,d}^{|v(u)|}\sum_{\bsk_u\in \nat^{|u|}}r_{\alpha,d}(\bsk_u,\bszero)\mathrm{wal}_{(\bsk_u,\bszero)}(\bsz_n) \nonumber \\
      & = \frac{1}{b^m}\sum_{n=0}^{b^m-1}\sum_{\emptyset\ne u\subseteq \{1:ds\}} \gamma_{v(u)}D_{\alpha,d}^{|v(u)|}\prod_{j\in u}\left[\sum_{k_j=1}^{\infty}r_{\alpha,d}(k_j)\mathrm{wal}_{k_j}(z_{n,j})\right] .
  \label{eq:criterion2}
  \end{align}
Following along similar lines as in the proof of \cite[Theorem~7.3]{B10}, we have for $z\in [0,1)$
  \begin{align*}
     \sum_{k=1}^{\infty}r_{\alpha,d}(k)\mathrm{wal}_{k}(z) & = (b-1)b^{-\alpha+1}\sum_{l=1}^{\infty}\frac{1}{b^{(2\min(\alpha,d)+1)l}}\sum_{k=b^{l-1}}^{b^l-1}\mathrm{wal}_{k}(z) \\
     & = (b-1)b^{-\alpha+1}\cdot \frac{b-1-b^{2\min(\alpha,d)\lfloor \log_b z\rfloor}(b^{2\min(\alpha,d)+1}-1)}{b(b^{2\min(\alpha,d)}-1)} \\
     & = \phi_{\alpha,d}(z).
  \end{align*}
Thus, the bracket in (\ref{eq:criterion2}) reduces to $\phi_{\alpha,d}(z_{n,j})$. We further rearrange \eqref{eq:criterion2}. For a given $\emptyset \ne w \subseteq \{1:s\}$ we now consider sets $\emptyset \ne u \subseteq \{1:ds\}$ such that $v(u) = w$. Then $u$ has to contain at least one element from $\{(j-1)d+1:jd\}$ for any $j\in w$. We therefore obtain
  \begin{align*}
     B_{\alpha,d, \bsgamma}(\bsq, p) = \frac{1}{b^m}\sum_{n=0}^{b^m-1}\sum_{\emptyset\ne w\subseteq \{1:s\}} \gamma_{w}D_{\alpha,d}^{|w|}\prod_{j\in w}\Big[ -1+\prod_{k=1}^{d}\left(1+\phi_{\alpha,d}(z_{n,(j-1)d+k})\right)\Big] .
  \end{align*}
In case of the product weights $\gamma_{v}=\prod_{j\in v}\gamma_j$, the last expression can be further simplified into
  \begin{align*}
     B_{\alpha,d, \bsgamma}(\bsq, p)  & = \frac{1}{b^m}\sum_{n=0}^{b^m-1}\sum_{\emptyset\ne w\subseteq \{1:s\}}\prod_{j\in w}\gamma_{j}D_{\alpha,d}\Big[ -1+\prod_{k=1}^{d}\left(1+\phi_{\alpha,d}(z_{n,(j-1)d+k})\right)\Big] \\
     & = -1+\frac{1}{b^m}\sum_{n=0}^{b^m-1}\prod_{j=1}^{s}\Big[ 1-\gamma_j D_{\alpha,d} + \gamma_j D_{\alpha,d}\prod_{k=1}^{d}\left(1+\phi_{\alpha,d}(z_{n,(j-1)d+k})\right)\Big] .
  \end{align*}
Hence the result follows.
\end{proof}

\section{Component-by-component construction of polynomial lattice point sets}
\label{CBC construction}

\subsection{Convergence rate of the variance}\label{subsec:proofthm1}

In the proof of Theorem \ref{theorem:cbc_proof} and its subsequent remark, we shall use Jensen's inequality, which states that for a sequence $(a_k)$ of non-negative real numbers we have
	\begin{align*}
		\left( \sum a_k\right)^{\lambda} \le \sum a_k^{\lambda},
	\end{align*}
for any $0<\lambda\le 1$.

\begin{proof}[Proof of Theorem~\ref{theorem:cbc_proof}]
We prove the result by following along the same lines as in the proof of \cite[Theorem~4.4]{DKPS05}. We proceed by induction. First for $\tau=1$, that is, for $j_0=1$ and $d_0=1$, we have
	\begin{align*}
	B_{\alpha,d, \bsgamma}(1,p) & = \gamma_{\{1\}}D_{\alpha,d} \sum_{\substack{ k=1\\ b^m|k}}^{\infty}r_{\alpha,d}(k) \\
	& = \gamma_{\{1\}}D_{\alpha,d}(b-1)b^{-\alpha+1} \sum_{l=1}^{\infty}b^{-(2\min(\alpha,d)+1)l}\sum_{\substack{ k=b^{l-1}\\ b^m|k}}^{b^l-1}1 \\
	& = \gamma_{\{1\}}D_{\alpha,d}(b-1)b^{-\alpha+1} \sum_{l=m+1}^{\infty}(b^l-b^{l-1})b^{-m}b^{-(2\min(\alpha,d)+1)l} \\
	& = \frac{1}{b^{(2\min(\alpha,d)+1)m}}\gamma_{\{1\}}D_{\alpha,d} \frac{(b-1)^2}{b^{\alpha}(b^{2\min(\alpha,d)}-1)} \\
	& \le \frac{1}{(b^m-1)^{1/\lambda}} \left[ \gamma_{\{1\}}^{\lambda}D_{\alpha,d}^{\lambda} \left(\frac{(b-1)^2}{b^{\alpha}(b^{2\min(\alpha,d)}-1)}\right)^{\lambda}\right]^{\frac{1}{\lambda}} ,
	\end{align*}
for all $1/(2\min(\alpha,d)+1)< \lambda \le 1$. Consequently, we obtain
	\begin{align*}
	B_{\alpha,d, \bsgamma}(1,p) \le \frac{1}{(b^m-1)^{1/\lambda}} \left[ \gamma_{\{1\}}^{\lambda}C_{\alpha,d,\lambda,1} \right]^{1/\lambda} .
	\end{align*}

Next, suppose that for some $\tau=(j_0-1)d+d_0$ such that $j_0, d_0 \in \nat$ and $0< d_0\le d$, we have $\bsq_{\tau}\in R_{b,m}^{\tau}$ which satisfies
	\begin{align*}
	B_{\alpha,d, \bsgamma}(\bsq_{\tau},p) \le \frac{1}{(b^m-1)^{1/\lambda}} \left[ \sum_{\emptyset \ne u\subseteq \{1:j_0-1\}}\gamma_u^\lambda C_{\alpha,d,\lambda,d}^{|u|}+C_{\alpha,d,\lambda,d_0}\sum_{u\subseteq \{1:j_0-1\}}\gamma_{u\cup\{j_0\}}^\lambda C_{\alpha,d,\lambda,d}^{|u|}\right]^{1/\lambda} .
	\end{align*}
We denote $\tau+1=(j_1-1)d+d_1$ such that $j_1, d_1 \in \nat$ and $0< d_1\le d$. It is obvious that we have
	\begin{align*}
	(j_1,d_1)=\left\{ \begin{array}{ll}
     (j_0+1,1) & \mathrm{if}\ d_0=d, \\
     (j_0,d_0+1) & \mathrm{otherwise}. \\
     \end{array} \right.
	\end{align*}
Now we consider from (\ref{eq:criterion})
	\begin{align*}
	B_{\alpha,d, \bsgamma}((\bsq_{\tau}, \tilde{q}_{\tau+1}), p) & = \sum_{\emptyset\ne u\subseteq \{1:\tau+1\}} \gamma_{v(u)}D_{\alpha,d}^{|v(u)|}\sum_{\substack{\bsk_u\in \nat^{|u|}\\ (\bsk_u,\bszero)\in D^{\perp }}}r_{\alpha,d}(\bsk_u,\bszero) \\
	& = \sum_{\emptyset\ne u\subseteq \{1:\tau\}} \gamma_{v(u)}D_{\alpha,d}^{|v(u)|}\sum_{\substack{\bsk_u\in \nat^{|u|}\\ (\bsk_u,\bszero)\in D^{\perp }}}r_{\alpha,d}(\bsk_u,\bszero) \\
	& \quad + \sum_{u\subseteq \{1:\tau\}} \gamma_{v(u\cup\{\tau+1\})}D_{\alpha,d}^{|v(u\cup\{\tau+1\})|}\sum_{\substack{\bsk_{u\cup\{\tau+1\}}\in \nat^{|u|+1}\\ (\bsk_{u\cup\{\tau+1\}},\bszero)\in D^{\perp }}}r_{\alpha,d}(\bsk_{u\cup\{\tau+1\}},\bszero) \\
	& = B_{\alpha,d, \bsgamma}(\bsq_{\tau}, p)+\theta(\tilde{q}_{\tau+1}) ,
	\end{align*}
where we define
	\begin{align*}
	\theta(\tilde{q}_{\tau+1}) := \sum_{u\subseteq \{1:\tau\}} \gamma_{v(u\cup\{\tau+1\})}D_{\alpha,d}^{|v(u\cup\{\tau+1\})|}\sum_{\substack{\bsk_{u\cup\{\tau+1\}}\in \nat^{|u|+1}\\ (\bsk_{u\cup\{\tau+1\}},\bszero)\in D^{\perp }}}r_{\alpha,d}(\bsk_{u\cup\{\tau+1\}},\bszero) .
	\end{align*}
In order to minimize $B_{\alpha,d, \bsgamma}((\bsq_{\tau}, \tilde{q}_{\tau+1}), p)$ as a function $\tilde{q}_{\tau+1}$, we only need to consider $\theta(\tilde{q}_{\tau+1})$. Based on an averaging argument, that is, the minimum value of $\theta(\tilde{q}_{\tau+1})$ is less than or equal to the average value of $\theta(\tilde{q}_{\tau+1})$ over $\tilde{q}_{\tau+1}\in R_{b,m}$, we have for $1/(2\min(\alpha,d)+1)<\lambda\le 1$
	\begin{align}
	\theta^{\lambda}(q_{\tau+1}) & \le \frac{1}{b^m-1}\sum_{\tilde{q}_{\tau+1}\in R_{b,m}}\theta^{\lambda}(\tilde{q}_{\tau+1}) \nonumber \\
	& \le \frac{1}{b^m-1}\sum_{\tilde{q}_{\tau+1}\in R_{b,m}}\sum_{u\subseteq \{1:\tau\}} \gamma_{v(u\cup\{\tau+1\})}^{\lambda}D_{\alpha,d}^{\lambda|v(u\cup\{\tau+1\})|}\nonumber \\ & \quad \times \sum_{\substack{\bsk_{u\cup\{\tau+1\}}\in \nat^{|u|+1}\\ (\bsk_{u\cup\{\tau+1\}},\bszero)\in D^{\perp }}}r_{\alpha,d}^{\lambda}(\bsk_{u\cup\{\tau+1\}},\bszero) \nonumber \\
	& = \sum_{u\subseteq \{1:\tau\}}\gamma_{v(u\cup\{\tau+1\})}^{\lambda}D_{\alpha,d}^{\lambda|v(u\cup\{\tau+1\})|} \nonumber \\ & \quad \times \frac{1}{b^m-1}\sum_{\tilde{q}_{\tau+1}\in R_{b,m}}\sum_{\substack{\bsk_{u\cup\{\tau+1\}}\in \nat^{|u|+1}\\ (\bsk_{u\cup\{\tau+1\}},\bszero)\in D^{\perp }}}r_{\alpha,d}^{\lambda}(\bsk_{u\cup\{\tau+1\}},\bszero) ,
	\label{eq:theta}
	\end{align}
where we have used Jensen's inequality in the second inequality. For a fixed $u\subseteq \{1:\tau\}$ of the outermost sum in (\ref{eq:theta}), if $k_{\tau+1}$ is a multiple of $b^m$, we always have $\rtr_m(k_{\tau+1})=0$ and the corresponding term becomes independent of $\tilde{q}_{\tau+1}$, or otherwise we have $\rtr_m(k_{\tau+1})\ne 0$ and $\rtr_m(k_{\tau+1})\tilde{q}_{\tau+1}$ cannot be a multiple of $p$ by considering that $p$ is irreducible. Hence we have
	\begin{align}
	& \frac{1}{b^m-1}\sum_{\tilde{q}_{\tau+1}\in R_{b,m}}\sum_{\substack{\bsk_{u\cup\{\tau+1\}}\in \nat^{|u|+1}\\ (\bsk_{u\cup\{\tau+1\}},\bszero)\in D^{\perp }}}r_{\alpha,d}^{\lambda}(\bsk_{u\cup\{\tau+1\}},\bszero) \nonumber \\
	= & \sum_{\substack{k_{\tau+1}=1\\ b^m\mid k_{\tau+1}}}^{\infty}r_{\alpha,d}^{\lambda}(k_{\tau+1})\sum_{\substack{\bsk_u\in \nat^{|u|}\\ \rtr_m(\bsk_u)\cdot \bsq_u=0 \pmod{p}}}r_{\alpha,d}^{\lambda}(\bsk_u) \nonumber \\
	& + \frac{1}{b^m-1}\sum_{\substack{k_{\tau+1}=1\\ b^m\nmid k_{\tau+1}}}^{\infty}r_{\alpha,d}^{\lambda}(k_{\tau+1})\sum_{\substack{\bsk_u\in \nat^{|u|}\\ \rtr_m(\bsk_u)\cdot \bsq_u\ne 0 \pmod{p}}}r_{\alpha,d}^{\lambda}(\bsk_u) .
	\label{eq:theta-r1}
	\end{align}
For the first term of the right-hand side in (\ref{eq:theta-r1}), we have
	\begin{align*}
	\sum_{\substack{k_{\tau+1}=1\\ b^m\mid k_{\tau+1}}}^{\infty}r_{\alpha,d}^{\lambda}(k_{\tau+1}) & = (b-1)^{\lambda}b^{-\lambda(\alpha-1)}\sum_{l=1}^{\infty}b^{-(2\min(\alpha,d)+1)\lambda l}\sum_{\substack{k_{\tau+1}=b^{l-1}\\ b^m\mid k_{\tau+1}}}^{b^l-1}1 \\
	& = \frac{(b-1)^{1+\lambda}}{b^{m+1+\lambda(\alpha-1)}}\sum_{l=m+1}^{\infty}b^{(1-(2\min(\alpha,d)+1)\lambda )l} .
	\end{align*}
For the second term, on the other hand, we have
	\begin{align*}
	& \quad \frac{1}{b^m-1}\sum_{\substack{k_{\tau+1}=1\\ b^m\nmid k_{\tau+1}}}^{\infty}r_{\alpha,d}^{\lambda}(k_{\tau+1}) \\
	& = \frac{1}{b^m-1}\sum_{l=1}^{m}\sum_{\substack{k_{\tau+1}=b^{l-1}\\ b^m\nmid k_{\tau+1}}}^{b^l-1}r_{\alpha,d}^{\lambda}(k_{\tau+1})+ \frac{1}{b^m-1}\sum_{l=m+1}^{\infty}\sum_{\substack{k_{\tau+1}=b^{l-1}\\ b^m\nmid k_{\tau+1}}}^{b^l-1}r_{\alpha,d}^{\lambda}(k_{\tau+1}) \\
	& = \frac{(b-1)^{1+\lambda}}{(b^m-1)b^{1+\lambda(\alpha-1)}}\sum_{l=1}^{m}b^{(1-(2\min(\alpha,d)+1)\lambda) l} \\
	& \quad + \frac{(b-1)^{1+\lambda}}{b^{m+1+\lambda(\alpha-1)}}\sum_{l=m+1}^{\infty}b^{(1-(2\min(\alpha,d)+1)\lambda) l} \\
	& = \frac{(b-1)^{1+\lambda}}{b^{1+\lambda(\alpha-1)}}\left[ \frac{1}{b^m-1}\sum_{l=1}^{m}b^{(1-(2\min(\alpha,d)+1)\lambda) l}+ \frac{1}{b^m}\sum_{l=m+1}^{\infty}b^{(1-(2\min(\alpha,d)+1)\lambda) l}\right] .
	\end{align*}
By inserting these equalities into (\ref{eq:theta-r1}), we have
	\begin{align*}
	& \quad \frac{1}{b^m-1}\sum_{\tilde{q}_{\tau+1}\in R_{b,m}}\sum_{\substack{\bsk_{u\cup\{\tau+1\}}\in \nat^{|u|+1}\\ (\bsk_{u\cup\{\tau+1\}},\bszero)\in D^{\perp }}}r_{\alpha,d}^{\lambda}(\bsk_{u\cup\{\tau+1\}},\bszero) \\
	& = \frac{(b-1)^{1+\lambda}}{b^{m+1+\lambda(\alpha-1)}}\sum_{l=m+1}^{\infty}b^{(1-(2\min(\alpha,d)+1)\lambda) l}\sum_{\bsk_u\in \nat^{|u|}}r_{\alpha,d}^{\lambda}(\bsk_u) \nonumber \\
	& \quad + \frac{(b-1)^{1+\lambda}}{(b^m-1)b^{1+\lambda(\alpha-1)}}\sum_{l=1}^{m}b^{(1-(2\min(\alpha,d)+1)\lambda) l}\sum_{\substack{\bsk_u\in \nat^{|u|}\\ \rtr_m(\bsk_u)\cdot \bsq_u \ne 0 \pmod{p}}}r_{\alpha,d}^{\lambda}(\bsk_u) \\
	& \le \frac{(b-1)^{1+\lambda}}{(b^m-1)b^{1+\lambda(\alpha-1)}}\sum_{l=1}^{\infty}b^{(1-(2\min(\alpha,d)+1)\lambda) l}\sum_{\bsk_u\in \nat^{|u|}}r_{\alpha,d}^{\lambda}(\bsk_u) \\
	& = \frac{(b-1)^{1+\lambda}}{(b^m-1)b^{\lambda(\alpha-1)}}\cdot \frac{1}{b^{(2\min(\alpha,d)+1)\lambda}-b}\prod_{j\in u}\left[ \sum_{k_j=1}^{\infty}r_{\alpha,d}^{\lambda}(k_j)\right]. 
	\end{align*}
Here the sum in the product is given by
	\begin{align*}
	\sum_{k_j=1}^{\infty}r_{\alpha,d}^{\lambda}(k_j) & = (b-1)^{\lambda}b^{-\lambda(\alpha-1)}\sum_{l_j=1}^{\infty}b^{-(2\min(\alpha,d)+1)\lambda l_j}\sum_{k_j=b^{l_j-1}}^{b^{l_j}-1}1 \\
	& = \frac{(b-1)^{1+\lambda}}{b^{1+\lambda(\alpha-1)}}\sum_{l_j=1}^{\infty}b^{(1-(2\min(\alpha,d)+1)\lambda) l_j} \\
	& = \frac{(b-1)^{1+\lambda}}{b^{\lambda(\alpha-1)}}\cdot \frac{1}{b^{(2\min(\alpha,d)+1)\lambda}-b}. 
	\end{align*}
Thus, from (\ref{eq:theta}) we obtain 
	\begin{align}
	\theta^{\lambda}(q_{\tau+1}) & \le \frac{1}{b^m-1}\sum_{u\subseteq \{1:\tau\}}\gamma_{v(u\cup\{\tau+1\})}^{\lambda}D_{\alpha,d}^{\lambda|v(u\cup\{\tau+1\})|}\tilde{C}_{\alpha,d,\lambda}^{|u|+1} .
	\label{eq:theta2}
	\end{align}
Recall that $\tau=(j_1-1)d+d_1-1$. Let $J_1:= \{1:(j_1-1)d \}$ and $J_2:= \{(j_1-1)d+1:(j_1-1)d+d_1-1 \}$. In case of $d_1=1$, the set $J_2$ is taken to be the empty set. Every subset $u\subseteq \{1:\tau\}$ can be split into a subset of $J_1$ and a subset of $J_2$. Since $\{\tau+1\}$ is one of $d$ components for the $j_1$-th coordinate, whether or not $u$ includes some element of $J_2$ does not affect $v(u\cup\{\tau+1\})$. From this observation, we have 
	\begin{align*}
	& \quad \sum_{u\subseteq \{1:\tau\}}\gamma_{v(u\cup\{\tau+1\})}^{\lambda}D_{\alpha,d}^{\lambda|v(u\cup\{\tau+1\})|}\tilde{C}_{\alpha,d,\lambda}^{|u|+1} \\
    & = \sum_{u_2\subseteq J_2}D_{\alpha,d}^{\lambda}\tilde{C}_{\alpha,d,\lambda}^{|u_2|+1}\sum_{u_1\subseteq J_1}\gamma_{v(u_1)\cup \{j_1\}}^{\lambda}D_{\alpha,d}^{\lambda|v(u_1)|}\tilde{C}_{\alpha,d,\lambda}^{|u_1|}.
	\end{align*}
By considering the terms associated with a certain $u$ ($u \subseteq \{1:j_1-1\}$) in the inner sum, at least one component of $\{(j-1)d+1:jd\}$ for all $j\in u$ must be chosen. Thus, 
	\begin{align*}
	& \quad \sum_{u\subseteq \{1:\tau\}}\gamma_{v(u\cup\{\tau+1\})}^{\lambda}D_{\alpha,d}^{\lambda|v(u\cup\{\tau+1\})|}\tilde{C}_{\alpha,d,\lambda}^{|u|+1} \\
	& = D_{\alpha,d}^{\lambda}\tilde{C}_{\alpha,d,\lambda}( 1+\tilde{C}_{\alpha,d,\lambda})^{d_1-1} \sum_{u\subseteq \{1:j_1-1\}}\gamma_{u\cup \{j_1\}}^{\lambda}\prod_{j\in u}D_{\alpha,d}^{\lambda}\left( -1+( 1+\tilde{C}_{\alpha,d,\lambda})^d \right) \\
	& = \left(C_{\alpha,d,\lambda,d_1}-C_{\alpha,d,\lambda,d_1-1}\right) \sum_{u\subseteq \{1:j_1-1\}}\gamma_{u\cup \{j_1\}}^{\lambda}C_{\alpha,d,\lambda,d}^{|u|}.
	\end{align*}
Finally, we have
	\begin{align*}
	& \quad B_{\alpha,d, \bsgamma}(\bsq_{\tau+1}, p) = B_{\alpha,d, \bsgamma}(\bsq_{\tau}, p)+\theta(q_{\tau+1}) \\
	& \le \frac{1}{(b^m-1)^{1/\lambda}} \left[ \sum_{\emptyset \ne u\subseteq \{1:j_0-1\}}\gamma_u^\lambda C_{\alpha,d,\lambda,d}^{|u|}+C_{\alpha,d,\lambda,d_0}\sum_{u\subseteq \{1:j_0-1\}}\gamma_{u\cup\{j_0\}}^\lambda C_{\alpha,d,\lambda,d}^{|u|}\right]^{1/\lambda} \\
	& \quad + \frac{1}{(b^m-1)^{1/\lambda}} \left[ \left(C_{\alpha,d,\lambda,d_1}-C_{\alpha,d,\lambda,d_1-1}\right) \sum_{u\subseteq \{1:j_1-1\}}\gamma_{u\cup \{j_1\}}^{\lambda}C_{\alpha,d,\lambda,d}^{|u|}\right]^{1/\lambda} .
	\end{align*}
In case of $0< d_0<d$, we have $j_1=j_0$ and $d_1=d_0+1$. Using Jensen's inequality, we obtain
	\begin{align*}
	B_{\alpha,d, \bsgamma}(\bsq_{\tau+1}, p) \le \frac{1}{(b^m-1)^{1/\lambda}} \left[ \sum_{\emptyset \ne u\subseteq \{1:j_1-1\}}\gamma_u^\lambda C_{\alpha,d,\lambda,d}^{|u|}+C_{\alpha,d,\lambda,d_1}\sum_{u\subseteq \{1:j_1-1\}}\gamma_{u\cup\{j_1\}}^\lambda C_{\alpha,d,\lambda,d}^{|u|}\right]^{\frac{1}{\lambda}} .
	\end{align*}
In case of $d_0=d$, we have $j_1=j_0+1$ and $d_1=1$. Again by using Jensen's inequality, we obtain
	\begin{align*}
	\quad B_{\alpha,d, \bsgamma}(\bsq_{\tau+1}, p) & \le \left[ \sum_{\emptyset \ne u\subseteq \{1:j_1-1\}}\gamma_u^\lambda C_{\alpha,d,\lambda,d}^{|u|}\right]^{\frac{1}{\lambda}} + \left[ C_{\alpha,d,\lambda,d_1}\sum_{u\subseteq \{1:j_1-1\}}\gamma_{u\cup \{j_1\}}^{\lambda}C_{\alpha,d,\lambda,d}^{|u|}\right]^{\frac{1}{\lambda}} \\
	& \le \left[ \sum_{\emptyset \ne u\subseteq \{j_1-1\}}\gamma_u^\lambda C_{\alpha,d,\lambda,d}^{|u|}+C_{\alpha,d,\lambda,d_1}\sum_{u\subseteq \{1:j_1-1\}}\gamma_{u\cup\{j_1\}}^\lambda C_{\alpha,d,\lambda,d}^{|u|}\right]^{\frac{1}{\lambda}} .
	\end{align*}
Hence, the proof is complete.
\end{proof}

\begin{remark}\label{remark:propagation}
We have shown that we can construct an interlaced polynomial lattice rule which satisfies
	\begin{align*}
	B_{\alpha,d, \bsgamma}(\bsq, p) \le  A_{\alpha,d,\bsgamma,\delta}b^{-(2\min(\alpha,d)+1)m+\delta} ,
	\end{align*}
for all $\delta > 0$. Let $\alpha \le \alpha' \le d$ and $\gamma'_w D_{\alpha',d}^{|w|}=(\gamma_w D_{\alpha,d}^{|w|})^{\frac{1+2\alpha'}{1+2\alpha}}$ for all $w\subseteq \{1:s\}$. We simply write $\bsgamma'=(\gamma'_w)_{w\subseteq \{1:s\}}$ and $\bsgamma=(\gamma_w)_{w\subseteq \{1:s\}}$. It follows from Jensen's inequality that
	\begin{align*}
	B_{\alpha',d, \bsgamma'}(\bsq, p) & =  \sum_{\emptyset\ne u\subseteq \{1:ds\}}\gamma'_{v(u)}D_{\alpha',d}^{|v(u)|}\sum_{\substack{\bsk_u\in \nat^{|u|}\\ (\bsk_u,\bszero)\in D^{\perp }}}r_{\alpha',d}(\bsk_u,\bszero) \\
    & \le  \sum_{\emptyset\ne u\subseteq \{1:ds\}}(\gamma_{v(u)} D_{\alpha,d}^{|v(u)|})^{\frac{1+2\alpha'}{1+2\alpha}}\sum_{\substack{\bsk_u\in \nat^{|u|}\\ (\bsk_u,\bszero)\in D^{\perp }}}r_{\alpha,d}^{\frac{1+2\alpha'}{1+2\alpha}}(\bsk_u,\bszero) \\
	& \le B_{\alpha,d,\bsgamma}(\bsq, p)^{\frac{1+2\alpha'}{1+2\alpha}} \\
	& \le A_{\alpha,d,\bsgamma,\delta}^{\frac{1+2\alpha'}{1+2\alpha}}b^{-(2\alpha'+1)m+ \frac{1+2\alpha'}{1+2\alpha}\delta}.
	\end{align*}
for all $\delta > 0$. This means that interlaced polynomial lattice rules constructed component-by-component for functions of smoothness $\alpha$ using an interlacing factor of $d$ still achieve the optimal rate of convergence for functions of smoothness $\alpha'$ as long as $\alpha \le \alpha' \le d$ holds. Our observation is similar to that of the classical polynomial lattice rule shown by \cite{BD11}, while we note that it is opposite from propagation rules \cite[Theorem~3.3]{D07} which states that a higher order net which achieves an optimal rate of convergence for function of smoothness $\alpha$ can achieve an optimal rate of convergence for function of smoothness $\alpha'$ for all $1\le \alpha'\le \alpha$.
\end{remark}

\subsection{Fast construction for product weights}\label{subsec_fast_cbc}

We now show how one can use the fast component-by-component construction to find suitable polynomials $q_1, \ldots, q_{ds} \in \FF_b[x]$ of degree less than $m$ for product weights. From Definition \ref{def:interlacing_polynomial_lattice} and Lemma \ref{lemma:criterion}, we have
\begin{align*}
& \quad B_{\alpha,d, \bsgamma}((q_1,\ldots, q_{ds}),p) \\ 
& = -1 + \frac{1}{b^m} \sum_{n=0}^{b^m-1} \prod_{j=1}^s \Big[1- \gamma_jD_{\alpha,d} + \gamma_j D_{\alpha,d} \prod_{k=1}^d \left( 1 + \phi_{\alpha,d}\left( v_m\left(\frac{n(x)q_{(j-1)d +k}(x)}{p(x)} \right)\right)\right)\Big].
\end{align*}

According to Algorithm \ref{algorithm:cbc}, set $q_1 =1$ and construct the polynomials $q_2, \ldots, q_{ds}$ inductively in the following way. Assume that $q_2, \ldots, q_{\tau}$ are already found. Let $\tau=(j_0-1)d+d_0$ and $\tau+1=(j_1-1)d+d_1$ such that $j_0,d_0,j_1,d_1 \in \nat$ and $0<d_0,d_1\le d$. As in the proof of Theorem \ref{theorem:cbc_proof}, $(j_1,d_1)=(j_0+1,1)$ if $d_0=d$, or otherwise $(j_1,d_1)=(j_0,d_0+1)$. Here we introduce the following notation
\begin{align*}
P_{n,\tau} := \prod_{j=1}^{j_1 - 1} \Big[1-\gamma_j D_{\alpha,d} + \gamma_j D_{\alpha,d}\prod_{k=1}^d \left( 1 + \phi_{\alpha,d}\left( v_m\left(\frac{n(x)q_{(j-1)d +k}(x)}{p(x)} \right)\right)\right)\Big] ,
\end{align*}
and
\begin{align*}
Q_{n,\tau} := \prod_{k=1}^{d_1-1} \left( 1 + \phi_{\alpha,d}\left(v_m\left(\frac{n(x)q_{(j_1-1)d +k}(x)}{p(x)}  \right) \right)\right) ,
\end{align*}
for $0\le n< b^m$. We note that $Q_{n,\tau}=1$ when $d_1=1$ (or $d_0=d$).

Since $p$ is an irreducible polynomial over $\FF_b[x]$, there exists a primitive polynomial $g$ in $\FF_b[x]/p$, that is $\{g^0(x) = g^{b^m-1}(x)=1, g^1(x), \ldots, g^{b^m-2}(x) \} = (\FF_b[x]/p)\setminus\{0\}$. Using the above notation, we have
\begin{align*}
& \quad B_{\alpha,d, \bsgamma}((\bsq_{\tau}, g^z),p) \\
& = -1 + \frac{1}{b^m} \sum_{n=0}^{b^m-1} P_{n,\tau} \Big[ 1- \gamma_{j_1}D_{\alpha,d} + \gamma_{j_1}D_{\alpha,d} Q_{n,\tau} \left( 1 + \phi_{\alpha,d}\left(v_m\left(\frac{g^{z}(x) g^{-n}(x) }{p(x)}  \right) \right) \right)\Big]
\end{align*}
for $1 \le z < b^m$, where $g^{-1}(x) = g^{b^m-2}(x)$ is the multiplicative inverse of $g(x)$ in $\FF_b[x]/p$ (which is also primitive). The aim here is to compute $B_{\alpha,d, \bsgamma}((\bsq_{\tau}, g^z),p)$ for $1 \le z < b^m$ and choose $z_0$ such that $B_{\alpha,d, \bsgamma}((\bsq_{\tau}, g^{z_0}),p) \le B_{\alpha,d, \bsgamma}((\bsq_{\tau}, g^z),p)$ for all $1 \le z < b^m$. Since we only need to compare the values of $B_{\alpha,d, \bsgamma}((\bsq_{\tau}, g^z),p)$ for different values of $z$, we only need to compute the terms which depend on $z$. That is, it is sufficient to compute
\begin{align*}
& \sum_{n=1}^{b^m-1} P_{n,\tau}Q_{n,\tau}\phi_{\alpha,d}\left(v_m\left(\frac{g^{z-n}(x) }{p(x)}  \right)\right).
\end{align*}

We define the circulant matrix
\begin{equation*}
A = \left(\phi_{\alpha,d}\left(v_m\left(\frac{g^{z-n}(x) }{p(x)} \right)\right)\right)_{1 \le z, n < b^m} ,
\end{equation*}
and $\bsa = (a_1,\ldots, a_{b^m-1})^\top$ with
\begin{align*}
a_n = P_{n,\tau}Q_{n,\tau} .
\end{align*}
Let $\bsb = A \bsa$ with $\bsb = (b_1,\ldots, b_{b^m-1})^\top$. Then $z_0$ is the integer $1 \le z_0 < b^m$ which satisfies $b_{z_0} \le b_z$ for $1 \le z < b^m$. Therefore we set $q_{\tau+1} = g^{z_0}$.

Since the matrix $A$ is circulant, the matrix vector multiplication $A \bsa$ can be done using the fast Fourier transform as shown in \cite{NC06a,NC06b}. Thus we obtain a fast computation of the vector $\bsb$.

After finding $q_{\tau+1}$, $P_{n,\tau}$ and $Q_{n,\tau}$ for $0\le n< b^m$ are updated as follows. If $d_1=d$,
\begin{align*}
	\left\{ \begin{array}{ll}
      P_{n,\tau+1}  = & P_{n,\tau}\Big[ 1- \gamma_{j_1}D_{\alpha,d} + \gamma_{j_1}D_{\alpha,d}Q_{n,\tau}\left(1+ \phi_{\alpha,d}\left(v_m\left(\frac{n(x)q_{\tau+1}(x)}{p(x)}  \right) \right)\right)\Big] , \\
      Q_{n,\tau+1}  = & 1 . \\
	\end{array} \right.
\end{align*}
Otherwise if $0<d_1<d$,
\begin{align*}
	\left\{ \begin{array}{ll}
      P_{n,\tau+1} & = P_{n,\tau} , \\
      Q_{n,\tau+1} & = Q_{n,\tau}\left(1+ \phi_{\alpha,d}\left(v_m\left(\frac{n(x)q_{\tau+1}(x)}{p(x)}  \right) \right)\right) . \\
	\end{array} \right.
\end{align*}
Then, we proceed to the next component. Unlike for classical polynomial lattice rules, such as \cite{BD11,NC06b}, here we are required to store  not one but two vectors $(P_{n,\tau})$ and $(Q_{n,\tau})$ in memory. By this slight increase in memory, the fast CBC construction using the fast Fourier transform can be applied. The construction cost is of order $\mathcal{O}(d s m b^m)$ operations using $\mathcal{O}(b^m)$ memory. This compares favorably with the construction of deterministic higher order polynomial lattice point sets in \cite{BDLNP} where the construction cost was of order $\mathcal{O}(\alpha s N^\alpha \log N)$ operations using $\mathcal{O}(N^\alpha)$ memory.

The implementation of interlaced scrambled polynomial lattice rules also requires an efficient implementation of the scrambling procedure. Since our results also hold for the simplifications of the scrambling scheme discussed in \cite{H96,Mat98, O03}, computationally efficient algorithms are available for this purpose. We again refer to \cite{H96,Mat98,O03} for a discussion of the computational efficiency of the various methods.

\section{Numerical experiments}
\label{Experiments}

Finally, we present some numerical results for the bound $B_{\alpha,d,\bsgamma}(\bsq, p)$ on the variance of the estimator in weighted function spaces with smoothness $\alpha\ge 1$. In our computation, the prime base $b$ is always fixed at 2 and only product weights are considered. As a reference, we also compute the following quality criterion $B_{\alpha,d,\bsgamma}(C_1,\ldots,C_{ds})$ by using the first $2^m$ terms of a Sobol' sequence \cite{S67}:
  \begin{align*}
     B_{\alpha,d, \bsgamma}(C_1,\ldots,C_{ds}) = \sum_{\emptyset\ne u\subseteq \{1:ds\}} \gamma_{v(u)}D_{\alpha,d}^{|v(u)|}\sum_{\substack{\bsk_u\in \nat^{|u|}\\ (\bsk_u,\bszero)\in D^{\perp }(C_1,\ldots,C_{ds})}}r_{\alpha,d}(\bsk_u,\bszero) ,
  \end{align*}
where $C_1,\ldots,C_{ds}$ denote the $m\times m$ generating matrices over $\FF_2$ of the $ds$-dimensional Sobol' sequence, and $D^{\perp }(C_1,\ldots,C_{ds})$ denotes the dual net of the first $2^m$ terms of Sobol' sequence.

In Figure \ref{fig:1}--\ref{fig:4}, we show the values of $B_{\alpha,d,\bsgamma}(\bsq,p)$ and $B_{\alpha,d,\bsgamma}(C_1,\ldots,C_{ds})$ from $m=4$ to $m=16$ with various choices of $\alpha$, $d$, $\bsgamma$ and $s$, where the values of $B_{\alpha,d,\bsgamma}(\bsq,p)$ are obtained by using the fast CBC construction. As proved in Theorem \ref{theorem:cbc_proof}, the CBC construction of interlaced scrambled polynomial lattice rules achieves the variance of the estimator of order $N^{-(2\min(\alpha,d)+1)+\delta}$ ($\delta >0$). Since higher order scrambled Sobol' point sets can also achieve the optimal convergence rate of the variance as shown in \cite{D11}, our comparison is reasonable.

Figure \ref{fig:1} shows the results for $s=1$ and $\gamma_1=D_{\alpha,d}^{-1}$ with various choices of $(\alpha, d)$. When $\alpha=1$ (or $d=1$), the decay rate is of order $N^{-3}$ as predicted by the theory. As $\alpha$ and $d$ increase simultaneously, the convergence rate increases to around $N^{-5}$ and $N^{-7}$ for $d=2$ and $d=3$, respectively, which is in accordance with our theory. There is no clear difference between $B_{\alpha,d,\bsgamma}(\bsq,p)$ and $B_{\alpha,d,\bsgamma}(C_1,\ldots,C_{ds})$.

Figure \ref{fig:2} shows the results for $s=2$ and $\gamma_1=\gamma_2= D_{\alpha,d}^{-1}$ with various choices of $(\alpha, d)$. In this case, we can see better convergence behaviors of $B_{\alpha,d,\bsgamma}(\bsq,p)$ as compared to $B_{\alpha,d,\bsgamma}(C_1,\ldots,C_{ds})$, and the almost optimal order of the convergence rate are achieved for our constructed point sets.

In Figures \ref{fig:3} and \ref{fig:4}, we compare the values of $B_{\alpha,d,\bsgamma}(\bsq,p)$ and $B_{\alpha,d,\bsgamma}(C_1,\ldots,C_{ds})$ with $(\alpha,d)=(2,2)$ from $s=1$ to $s=5$ for two different product weights, respectively. One is $\gamma_j=D_{\alpha,d}^{-1}$, the other is $\gamma_j=D_{\alpha,d}^{-1}j^{-2}$. The latter implies a decreasing importance of the successive coordinates. We can see again better convergence behaviors of $B_{\alpha,d,\bsgamma}(\bsq,p)$ as compared to $B_{\alpha,d,\bsgamma}(C_1,\ldots,C_{ds})$. It is clear that a better convergence of $B_{\alpha,d,\bsgamma}(\bsq,p)$ can be observed for decreasing weights (see Figure~\ref{fig:4}). This is reasonable, since our algorithm allows us to adjust our rules to the weights (which is not the case for the Sobol' sequence).

\begin{figure}
\begin{center}
\includegraphics[width=12cm]{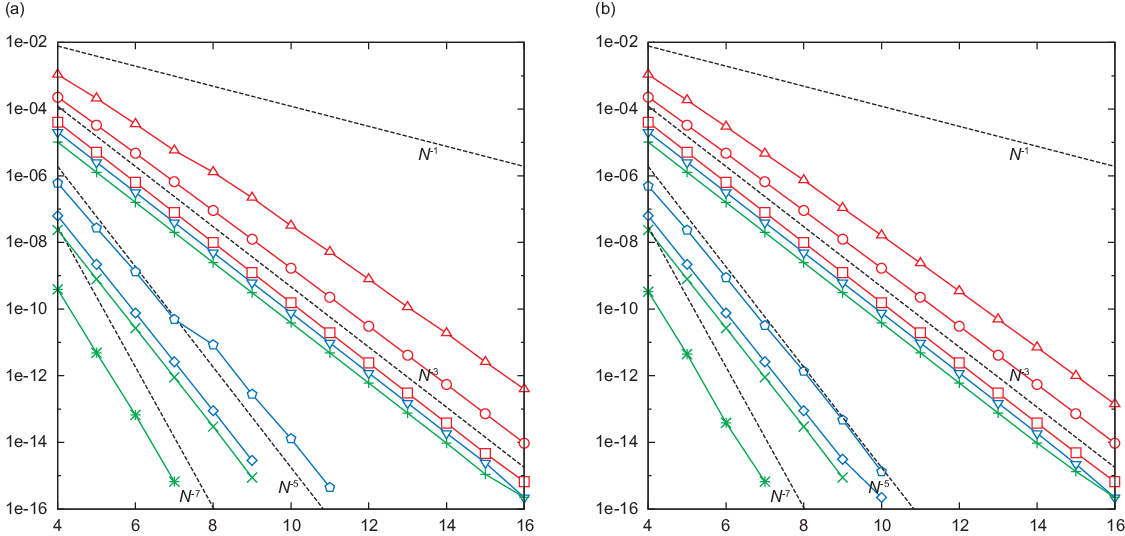}
\caption{Values of $B_{\alpha,d,\bsgamma}(\bsq,p)$ (left) and $B_{\alpha,d,\bsgamma}(C_1,\ldots,C_{ds})$ (right) for $s=1$ and $\gamma_1=D_{\alpha,d}^{-1}$ with various choices of $(\alpha,d)=(1,1), (1,2), (1,3), (2,1), (2,2), (2,3), (3,1), (3,2), (3,3)$, marked respectively by square, circle, triangle, down triangle, diamond, pentagon, plus sign, cross, and asterisk.}
\label{fig:1}
\end{center}
\end{figure}
\begin{figure}
\begin{center}
\includegraphics[width=12cm]{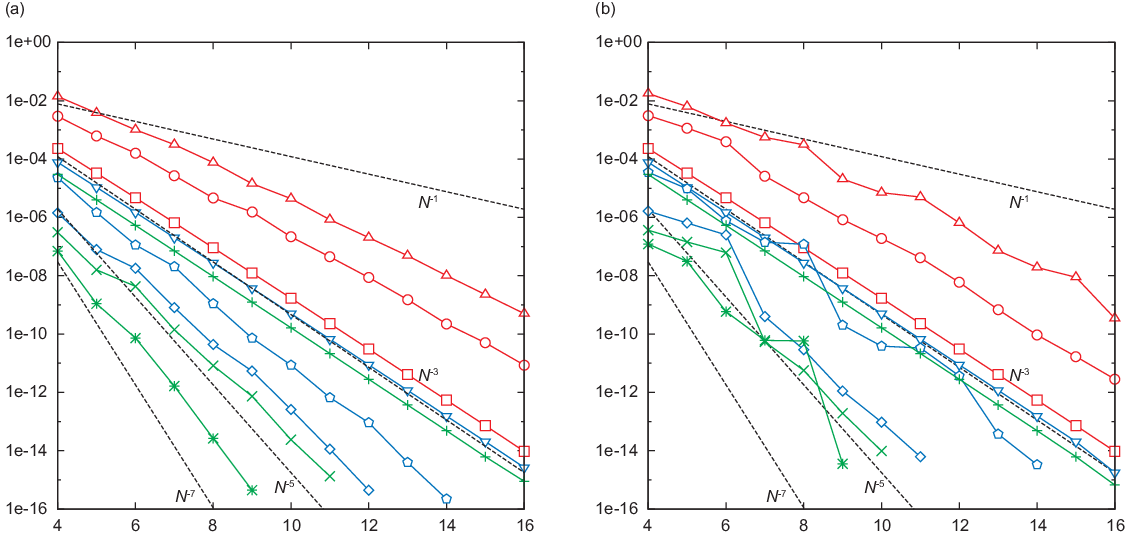}
\caption{Values of $B_{\alpha,d,\bsgamma}(\bsq,p)$ (left) and $B_{\alpha,d,\bsgamma}(C_1,\ldots,C_{ds})$ (right) for $s=2$ and $\gamma_1=\gamma_2=D_{\alpha,d}^{-1}$ with various choices of $(\alpha,d)=(1,1), (1,2), (1,3), (2,1), (2,2), (2,3), (3,1), (3,2), (3,3)$, marked respectively by square, circle, triangle, down triangle, diamond, pentagon, plus sign, cross, and asterisk.}
\label{fig:2}
\end{center}
\end{figure}
\begin{figure}
\begin{center}
\includegraphics[width=12cm]{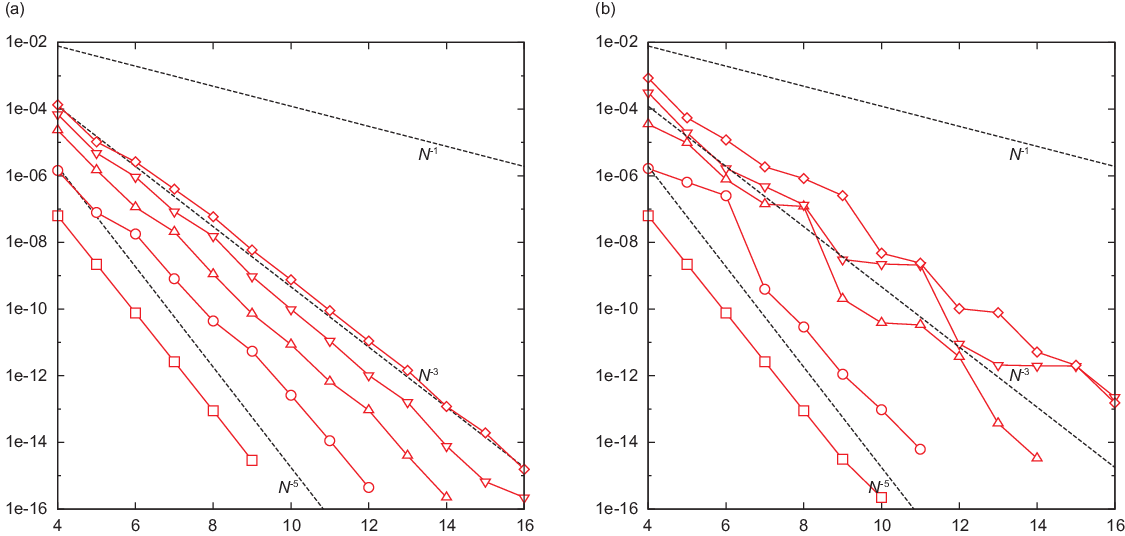}
\caption{Values of $B_{\alpha,d,\bsgamma}(\bsq,p)$ (left) and $B_{\alpha,d,\bsgamma}(C_1,\ldots,C_{ds})$ (right) with $(\alpha,d)=(2,2)$ and $\gamma_j=D_{\alpha,d}^{-1}$ from $s=1$ to $s=5$, marked respectively by square, circle, triangle, down triangle and diamond.}
\label{fig:3}
\end{center}
\end{figure}
\begin{figure}
\begin{center}
\includegraphics[width=12cm]{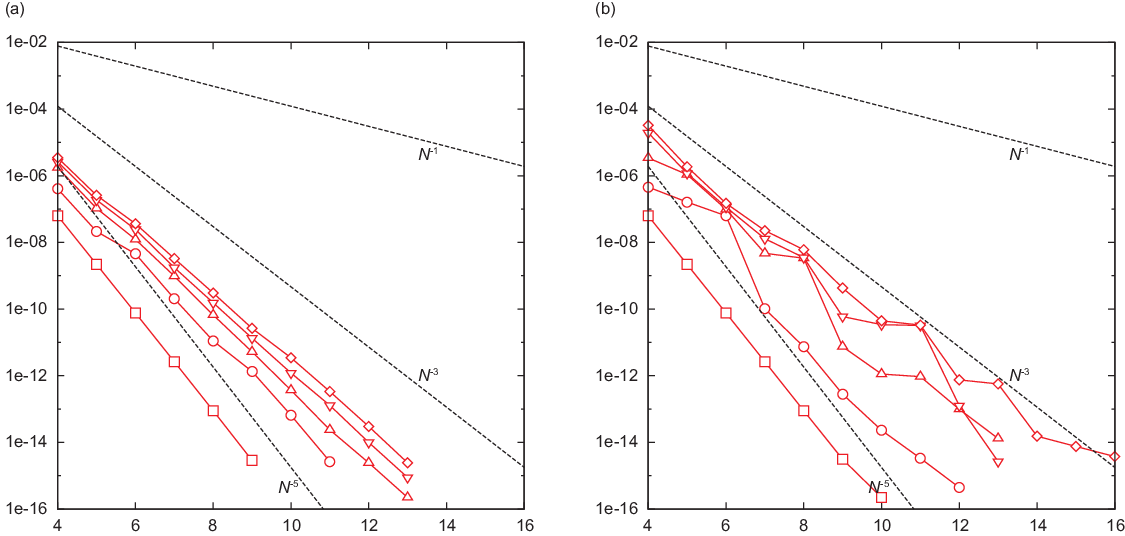}
\caption{Values of $B_{\alpha,d,\bsgamma}(\bsq,p)$ (left) and $B_{\alpha,d,\bsgamma}(C_1,\ldots,C_{ds})$ (right) with $(\alpha,d)=(2,2)$ and $\gamma_j=D_{\alpha,d}^{-1}j^{-2}$ from $s=1$ to $s=5$, marked respectively by square, circle, triangle, down triangle and diamond.}
\label{fig:4}
\end{center}
\end{figure}

\section*{Acknowledgment}

The first author is supported by JSPS Grant-in-Aid for JSPS Fellows No.24-4020 and the second author is supported by a Queen Elizabeth 2 Fellowship from the Australian Research Council. T.G. would like to thank Josef Dick for his hospitality while visiting the University of New South Wales where this research was carried out.


\end{document}